\newtheorem{theorem}{Theorem}[section]
\newtheorem{lemma}[theorem]{Lemma}
\renewcommand \theequation {%
\ifnum \c@section>\z@ \@arabic\c@section.%
%\fi \ifnum\c@subsection>\z@\@arabic\c@subsection.%
\fi\@arabic\c@equation} \@addtoreset{equation}{section}
\providecommand{\ud}[1]{\mathrm{d}{#1}}
\providecommand{\abs}[1]{\left\vert#1\right\vert}
\providecommand{\nm}[1]{\left\Vert#1\right\Vert}
\providecommand{\br}[1]{\langle #1 \rangle}
\providecommand{\id}[1]{{\bf{1}}_{\{#1\}}}
\providecommand{\lnm}[1]{\left\Vert#1\right\Vert_{L^{\infty}}}
\def\dt{\partial_t}
\def\p{\partial}
\def\half{\frac{1}{2}}
\def\rt{\rightarrow}
\def\r{\mathbb{R}}
\def\no{\nonumber}
\def\ue{\mathrm{e}}
\def\ds{\displaystyle}
\def\c{c}
\def\a{\alpha}
\def\e{\mathcal{E}}
\def\m{\mathcal{M}}
\def\ee{\mathfrak{E}}
\def\mm{\mathfrak{M}}
\def\g{\Gamma}
\def\l{\Lambda}
\begin{document}
\title{The Nonrelativistic Limit of Relativistic Vlasov-Maxwell System}

\author[J. Schaeffer]{Jack Schaeffer}
\address[J. Schaeffer]{
   \newline\indent Department of Mathematical Sciences, Carnegie Mellon University
\newline\indent Pittsburgh, PA 15213, USA}
\email{js5m@andrew.cmu.edu}

\author[L. Wu]{Lei Wu}
\address[L. Wu]{
   \newline\indent Department of Mathematical Sciences, Carnegie Mellon University
\newline\indent Pittsburgh, PA 15213, USA}
\email{lwu2@andrew.cmu.edu}

\begin{abstract}
We consider the one and one-half dimensional multi-species relativistic Vlasov-Maxwell system with non-decaying(in space) initial data. We prove its well-posedness and nonrelativistic limit as the speed of light $\c\rt\infty$. These results mainly rely on a delicate analysis of energy structure and application of estimates along the characteristic lines.\\
\textbf{Keywords:} non-decaying data; well-posedness; nonrelativistic limit.
\end{abstract}

\maketitle

\pagestyle{myheadings} \thispagestyle{plain} \markboth{JACK SCHAEFFER AND LEI WU}{NONRELATIVISTIC LIMIT OF RELATIVISTIC VLASOV-MAXWELL SYSTEM}

%%%%%%%%%%%%%%%%%%%%%%%%%%%%%%%%%%%%%%%%%%%%%%%%%%%%%%%%%%%%%%%%%%%%%%%%
\section{Introduction}%%%%%%%%%%%%%%%%%%%%%%%%%%%%%%%%%%%%%%%%%
%%%%%%%%%%%%%%%%%%%%%%%%%%%%%%%%%%%%%%%%%%%%%%%%%%%%%%%%%%%%%%%%%%%%%%%%

Consider the one and one-half dimensional relativistic Vlasov-Maxwell system:
\begin{eqnarray}\label{simple system}
\left\{
\begin{array}{l}
\dt f^{\a}+V^{\a}_1(p)\p_{x}f^{\a}
+e^{\a}\bigg(E_1+\c^{-1}V^{\a}_2(p)B\bigg)\p_{p_1}f^{\a}+e^{\a}\bigg(E_2-\c^{-1}V^{\a}_1(p)B\bigg)\p_{p_2}f^{\a}=0,\\\rule{0ex}{1.0em}
\p_{x}E_1=4\pi\rho,\quad \dt E_1=-4\pi j_1,\\\rule{0ex}{1.0em}
\dt E_2+\c\p_{x}B=-4\pi j_2,\quad \dt B+\c\p_{x}E_2=0,\\\rule{0ex}{1.0em}
\rho(t,x)=\displaystyle\sum_{\a}\bigg(e^{\a}\int_{\r^2}f^{\a}(t,x,p)\ud{p}\bigg),\quad
j(t,x)=\displaystyle\sum_{\a}\bigg(e^{\a}\int_{\r^2}V^{\a}(p)f^{\a}(t,x,p)\ud{p}\bigg).
\end{array}
\right.
\end{eqnarray}
Here $t$ is time, $x\in\mathbb{R}$ is position, $p\in\mathbb{R}^2$ is momentum, and $f^{\alpha}(t,x,p)$ is the number density in phase space of particles of charge $e^{\alpha}$ and mass $m^{\alpha}$.  The velocity of a particle is
\begin{eqnarray}
V^{\a}(p)=\frac{p}{\sqrt{(m^{\a})^2+\c^{-2}p^2}},
\end{eqnarray}
where $c$ is the speed of light.  As defined in (\ref{simple system}) $\rho$ and $j$ are respectively the charge and current densities.  The induced electromagnetic field is given by
\begin{eqnarray}
\begin{array}{l}
\vec{E}=(E_1,E_2,0),\quad \vec{B}=(0,0,B).
\end{array}
\end{eqnarray}
As initial data for (\ref{simple system}) we take
\begin{eqnarray}\label{initial data}
\left\{
\begin{array}{l}
f^{\a}(t,x,p)=f_0^{\a}(x,p),\\
E_1(0,x)=E_{1,0}(x),\\
E_2(0,x)=E_{2,0}(x),\\
B(0,x)=B_{0}(x)
\end{array}
\right.
\end{eqnarray}
to be given, where it is assumed that
\begin{eqnarray}\label{wt j4}
\p_{x}E_{1,0}=4\pi\displaystyle\sum_{\a}\bigg(e^{\a}\int_{\r^2}f^{\a}_0(x,p)\ud{p}\bigg).
\end{eqnarray}

The goal of this work is to study the behavior of $f^{\alpha}, E_1, E_2, B$ as $c\rightarrow\infty$.  There are several papers in the literature ~\cite{1,2,8,17,28,29,27} that study this limit for solutions of the two and three dimensional versions of (\ref{simple system}) where $f^{\alpha}\rightarrow 0$ as $|x|\rightarrow\infty$.  The goal here is to consider solutions that do not decay as $|x|\rightarrow\infty$ and hence have infinite charge and energy.  As $c\rightarrow\infty$ the limiting problem is the Vlasov-Poisson system where the lack of spatial decay is a serious issue ~\cite{3,4,5,6,7,16,19,20,21,22,25,26}.  Thus assumptions must be made on the large $|x|$ behavior of the initial data.  As in ~\cite{25} assume that for each $\alpha$ there are $F^{\alpha}:\mathbb{R}^2\rightarrow[0,\infty)$ which is $C^1$ and positive constants $R_0$ and $Q_0$ such that $|x|\geq R_0$ implies
\begin{eqnarray}\label{wt j2}
f^{\alpha}_0(x,p)-F^{\alpha}(p)=E_{2,0}(x)=B_0(x)=0
\end{eqnarray}
and $|p|\geq Q_0$ implies
\begin{eqnarray}\label{wt j3}
f^{\alpha}_0(x,p)=F^{\alpha}(p)=0.
\end{eqnarray}
Further assume that
\begin{eqnarray}\label{wt j6}
\sum_{\a}\bigg(e^{\a}\int_{\r^2}F^{\a}(p)\ud{p}\bigg)&=&0,\\\label{wt j7}
\sum_{\a}\bigg(e^{\a}\int_{\r^2}F^{\a}(p)\frac{p}{m^{\a}}\ud{p}\bigg)&=&0,
\end{eqnarray}
and
\begin{eqnarray}\label{wt j8}
\sum_{\a}\Bigg(e^{\a}\int_{\r}\int_{\r^2}\bigg(f^{\a}_0(x,p)-F^{\a}(p)\bigg)\ud{p}\ud{x}\Bigg)&=&0.
\end{eqnarray}
Then define
\begin{eqnarray}\label{wt j9}
\rho_0(x)&=&\sum_{\a}\bigg(e^{\a}\int_{\r^2}f^{\a}_0(x,p)\ud{p}\bigg),\\\label{wt j10}
j_0(x)&=&\sum_{\a}\bigg(e^{\a}\int_{\r^2}f^{\a}_0(x,p)V^{\a}(p)\ud{p}\bigg),
\end{eqnarray}
and
\begin{eqnarray}\label{wt j5}
E_{1,0}(x)=2\pi\int_{-\infty}^x\rho_0(y)\ud{y}-2\pi\int^{\infty}_x\rho_0(y)\ud{y}.
\end{eqnarray}
Note that (\ref{wt j4}) follows from (\ref{wt j5})
and for any $\abs{x}\geq R_0$,
\begin{eqnarray}
E_{1,0}(x)=0.
\end{eqnarray}
Then the main results of this paper are the following two theorems:

\begin{theorem}\label{main 1}
Let $f^{\a}_0\geq 0$, $E_{2,0}$, and $B_0$ be $C^2$ and assume that (\ref{wt j6}) through (\ref{wt j5}) hold.  Then there is a global $C^1$ solution $(f^{\a},E_1,E_2,B)$ of (\ref{simple system}) and (\ref{initial data}).  Moreover, for every $T>0$ and $c\geq 1$ there exists $C_0>0$ (depending on $T$ and initial data, but not on $c$) such that
\begin{eqnarray}
|f^{\a}(t,x,p)|+|E_1(t,x)|+|E_2(t,x)|+|B(t,x)|&\leq&C_0
\end{eqnarray}
for every $\a$ and every $(t,x,p)\in [0,T]\times\mathbb{R}\times\mathbb{R}^2$.
\end{theorem}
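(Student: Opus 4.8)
The plan is to combine a standard local-existence-plus-continuation scheme with a priori bounds that are uniform in $\c\geq1$, the only delicate point being control of the momentum support. First I would observe that the force field in the Vlasov equation is divergence-free in $p$: a direct computation gives $\p_{p_1}\big(\c^{-1}V_2^\a B\big)+\p_{p_2}\big(-\c^{-1}V_1^\a B\big)=\c^{-1}B\big(\p_{p_1}V_2^\a-\p_{p_2}V_1^\a\big)=0$, since both mixed derivatives of $V^\a$ equal $-\c^{-2}p_1p_2\big((m^\a)^2+\c^{-2}p^2\big)^{-3/2}$. Hence $f^\a$ is constant along its characteristics and $\lnm{f^\a(t)}=\lnm{f^\a_0}$ for all $t$, which already yields the bound on $f^\a$. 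By finite propagation speed ($\abs{V_1^\a}\leq\c$) together with (\ref{wt j2})--(\ref{wt j3}), the quantities $\rho$, $j$, $E_1$, $E_2$, $B$ stay compactly supported in $x$ for each fixed $t$, and the whole problem reduces to bounding, on $[0,T]$ and uniformly in $\c$, the momentum support $Q(t)=\sup\{\abs{p}:f^\a(t,x,p)\neq0\text{ for some }\a,x\}$ and the field quantity $\mathcal{G}(t)=\lnm{E_1(t)}+\lnm{E_2(t)}+\lnm{B(t)}$; once these are finite the solution is $C^1$ and can be continued.

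Next I would set up the field representations. Writing $(\dt\pm\c\p_x)(E_2\pm B)=-4\pi j_2$, integration along the light rays $\ud{x}/\ud{t}=\pm\c$ expresses $E_2\pm B$ at $(t,x)$ as the initial value at the foot of the ray minus $4\pi\int_0^t j_2$ evaluated along the ray; the change of variables $y=x\mp\c(t-s)$ converts this time integral into $\c^{-1}\int j_2\,\ud{y}$ over a spatial window of length $\c t$, and it is this $\c^{-1}$ that must be played against the $O(\c t)$ spreading of the support to keep the bound uniform in $\c$. For the longitudinal field I would use the one-dimensional Gauss law in the form $E_1(t,x)=4\pi\int_{-\infty}^x\rho(t,y)\,\ud{y}$, valid because total neutrality $\int_{\r}\rho\,\ud{y}=0$ is preserved in time (it is the conserved quantity set to zero in (\ref{wt j8}), together with (\ref{wt j6})). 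Along characteristics the momentum then obeys $\abs{\dot P}\leq\abs{e^\a}\big(\abs{E_1}+\abs{E_2}+2\abs{B}\big)$, using $\c^{-1}\abs{V_i^\a}\leq1$, so that $Q'(t)\ls\mathcal{G}(t)$.

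The heart of the argument is a $\c$-uniform control of $\mathcal{G}$, and here the non-decaying data forces two structural devices. Since the total charge and energy are infinite, I would work with a \emph{relative} energy obtained by subtracting the spatially homogeneous background $F^\a$; the neutrality relations (\ref{wt j6})--(\ref{wt j8}) are exactly what make this functional finite and render its field part, $\frac{1}{8\pi}\int(E_1^2+E_2^2+B^2)\,\ud{x}$, bounded uniformly in $\c$, thereby furnishing $L^2_x$ control of the fields that does not degrade as the support grows. In parallel, introducing the potential $A_2$ with $\p_t A_2=\c E_2$ and $\p_x A_2=-B$, one checks that the canonical momentum $P_2-\c^{-1}e^\a A_2$ is conserved along characteristics; this replaces the magnetic contribution to the growth of $P_2$ by a term $\c^{-1}e^\a A_2$, where the prefactor $\c^{-1}$ again absorbs the explicit $\c$ in $A_2=\c\int_0^t E_2\,\ud{s}+A_2(0,\cdot)$, keeping everything uniform in the speed of light.

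Finally I would close a Gr\"onwall-type system for $(Q,\mathcal{G})$ by feeding the relative-energy $L^2_x$ bounds and the $\c^{-1}$-gain into the light-ray representation of $E_2\pm B$ and into the Gauss-law representation of $E_1$. The main obstacle is precisely this closure: the crude pointwise bounds $\lnm{\rho}\ls Q^2$ and $\lnm{j}\ls Q^3$ only yield $\mathcal{G}'\ls Q^3$ and hence $Q''\ls Q^3$, which blows up in finite time, so global existence cannot follow from sup-norm estimates alone. The task is to use the finite relative energy to replace the spatial current integral $\int\abs{j_2}\,\ud{y}$ by an energy-controlled quantity (rather than $Q^3$ times the support length) and to use the canonical-momentum conservation to tame the magnetic force, so that $Q$ and $\mathcal{G}$ satisfy a \emph{linearly} coupled integral inequality with at most exponential growth on $[0,T]$. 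Once $Q(t)$ and $\mathcal{G}(t)$ are bounded on $[0,T]$ uniformly in $\c$, the continuation criterion gives the global $C^1$ solution and the asserted uniform bound on $f^\a$, $E_1$, $E_2$, and $B$.
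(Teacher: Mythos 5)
You have correctly isolated where the difficulty lies, but the proposal stops exactly there: the step you call ``the task'' --- replacing the current integral by an energy-controlled quantity so that $Q$ and $\mathcal{G}$ couple linearly --- is the entire content of the paper's Lemma \ref{lemma 1}, and the mechanism you propose for it would not deliver it. Two concrete problems. (i) Your relative energy, built by subtracting the background $F^{\a}$, is not uniform in $c$: since $\c^2\sqrt{(m^{\a})^2+\c^{-2}p^2}=\c^2m^{\a}+p^2\big(\sqrt{(m^{\a})^2+\c^{-2}p^2}+m^{\a}\big)^{-1}$, its dominant part is $\c^2\sum_{\a}m^{\a}\iint\big(f^{\a}_0-F^{\a}\big)\,\ud{p}\,\ud{x}$, and hypothesis (\ref{wt j8}) only annihilates the \emph{charge}-weighted sum $\sum_{\a}e^{\a}\iint(f^{\a}_0-F^{\a})$, not this \emph{mass}-weighted one; so the functional is generically of size $\c^2$ and no $c$-uniform field bound can be extracted from it. The subtraction that actually works is of the \emph{rest mass}: $\ee=\e-\c^2\sum_{\a}m^{\a}\int f^{\a}\ud{p}$, $\mm=\m-\c^2\sum_{\a}m^{\a}\int f^{\a}V^{\a}_1\ud{p}$ still satisfy the pointwise law $\dt\ee+\p_x\mm=0$, and this requires no global integrability at all. (ii) Even granted a fixed-time bound on $\int(E_1^2+E_2^2+B^2)\ud{x}$, it cannot be ``fed into'' the light-ray representation: after your change of variables, $\c^{-1}\int j_2\,\ud{y}$ is evaluated at a different time for each $y$, i.e.\ along the ray, and fixed-time spatial bounds say nothing about such integrals. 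The paper obtains ray-wise control by integrating $\dt\ee+\p_x\mm=0$ over the backward light triangle: because $\ee(0,\cdot)$ is merely \emph{bounded} (it need not be integrable), $\c^{-1}\int_{x-\c t}^{x+\c t}\ee(0,y)\ud{y}\leq C_0t$; and because $\ee\pm\c^{-1}\mm\geq k_{\pm}\geq0$, where $k_{\pm}=\sum_{\a}\int f^{\a}\c^2(\g^{\a}-m^{\a})(1\pm\c^{-1}V^{\a}_1)\ud{p}$ --- a positivity that survives rest-mass subtraction but is destroyed by background subtraction, since $f^{\a}-F^{\a}$ has no sign --- one gets $\int_0^tk_{\pm}\,\ud{\tau}\leq C_0$ along every ray. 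The closing ingredient, absent from your outline, is the pointwise inequality $\abs{j_2}\leq C_0(1+k_{\pm})$, proved by splitting the momentum integral into $\abs{p}\leq1$, $1\leq\abs{p}\leq\c$, $\abs{p}\geq\c$.

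There are two further gaps. Your claim that $\rho,j,E_1,E_2,B$ stay compactly supported is false: outside the light cone the solution coincides with the spatially homogeneous solution launched from $(F^{\a},0,0,0)$, whose current $\sum_{\a}e^{\a}\int F^{\a}V^{\a}\ud{p}$ is $O(\c^{-2})$ but generically nonzero (hypothesis (\ref{wt j7}) concerns $p/m^{\a}$, not $V^{\a}$), so $j$, and then $E_1$ and $E_2$, acquire nonzero spatially constant far-field values; only $\rho$ and $B$ remain compactly supported, and your formula $E_1(t,x)=4\pi\int_{-\infty}^x\rho(t,y)\ud{y}$ fails for $t>0$ by such a constant. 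More seriously, you give no mechanism for a $\c$-uniform bound on $E_1$: Gauss's law alone yields only $\lnm{E_1}\ls Q^2\cdot\c t$, which is useless. The paper (Lemma \ref{lemma 2}) instead truncates the data, uses that the characteristic flow is measure preserving to write $E_1^L(t,x)-E^L_{1,0}(x)$ as an integral of $f^{\a,L}_0(y,p)\big(\id{X^{\a,L}(t;0,y,p)\leq x}-\id{y\leq x}\big)$, and observes that the bracket vanishes unless $\abs{y-x}\leq\frac{1}{m^{\a}}\int_0^tQ^L(s)\ud{s}$ --- the window is set by particle speeds $Q^L/m^{\a}$, not by $\c$ --- giving $\lnm{E_1^L}\leq C_0+C_0\int_0^tQ^L$, which closes against $Q^L(t)\leq C_0+C_0\int_0^t\sup\abs{E^L}$ by Gronwall, uniformly in $L$ and $\c$. (Your canonical-momentum device is correct but superfluous here: $\c^{-1}\abs{V^{\a}_1}\leq1$ already bounds the magnetic force by $\abs{B}$, which is what the paper uses.)
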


To state the second theorem we must define $f^{\a,\infty}$ and $E^{\infty}_1$ by
\begin{eqnarray}\label{limit system}
\left\{
\begin{array}{l}\label{wt j11}
\dt f^{\a,\infty}+V^{\a,\infty}_1(p)\p_{x}f^{\a,\infty}
+e^{\a}E_1^{\infty}\p_{p_1}f^{\a,\infty}=0,\\\rule{0ex}{1.0em}
\ds\rho^{\infty}=\sum_{\a}e^{\a}\int_{\mathbb{R}^2}f^{\a,\infty}dp,\\\rule{0ex}{1.0em}
\ds j^{\infty}=\sum_{\a}e^{\a}\int_{\mathbb{R}^2}f^{\a,\infty}\frac{p}{m^{\a}}dp,\\\rule{0ex}{1.0em}
\ds E^{\infty}_1=2\pi\int_{\infty}^x\rho^{\infty}(y)dy-2\pi\int_x^{\infty}\rho^{\infty}(y)dy,
\end{array}
\right.
\end{eqnarray}
with
\begin{eqnarray}\label{wt j12}
f^{\a,\infty}(0,x,p)=f^{\a}_0(x,p)
\end{eqnarray}
and
\begin{eqnarray}
V^{\a,\infty}(p)=\frac{p}{m^{\a}}.
\end{eqnarray}
From ~\cite{19} it is known that (\ref{wt j11}) and (\ref{wt j12}) possesses a global $C^1$ solution.

\begin{theorem}\label{main 2}
Assume that
\begin{eqnarray}
E_{2,0}=B_0=0.
\end{eqnarray}
Then, with the same assumptions as in Theorem \ref{main 1}, for every $T>0$ and $c\geq 1$ there exists $C_0>0$ (depending on $T$ and initial data, but not on $c$) such that
\begin{eqnarray}
|f^{\a}(t,x,p)-f^{\a,\infty}(t,x,p)|+|E_1(t,x)-E_1^{\infty}(t,x)|+|E_2(t,x)|+|B(t,x)|&\leq&C_0\c^{-1}
\end{eqnarray}
for every $\a$ and every $(t,x,p)\in [0,T]\times\mathbb{R}\times\mathbb{R}^2$.
\end{theorem}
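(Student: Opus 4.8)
The plan is to run a single Gronwall argument on the combined error functional
\[
\mathcal{A}(t)=\sup_{0\le s\le t}\Big[\sum_{\a}\lnm{f^{\a}(s)-f^{\a,\infty}(s)}+\lnm{E_1(s)-E_1^{\infty}(s)}+\lnm{E_2(s)}+\lnm{B(s)}\Big],
\]
and to show $\mathcal{A}(T)\ls\c^{-1}$, using throughout the uniform-in-$\c$ bounds of Theorem \ref{main 1}. First I would record two preliminary facts. (i) Since $\abs{\c^{-1}V_2^{\a}B}\le\abs{B}\le C_0$ and $\abs{E_1}\le C_0$ uniformly in $\c$, the momentum ODE $\dot P=e^{\a}\big(E_1+\c^{-1}V^{\a}_2 B,\,E_2-\c^{-1}V^{\a}_1 B\big)$ along characteristics yields a $\c$-independent bound $\abs{p}\le Q(T)$ on the support of $f^{\a}$, and likewise for $f^{\a,\infty}$. (ii) On $\abs{p}\le Q(T)$ one has $\abs{V^{\a}(p)-V^{\a,\infty}(p)}\ls\c^{-2}$, because $\frac{1}{\sqrt{(m^{\a})^2+\c^{-2}p^2}}-\frac{1}{m^{\a}}=O(\c^{-2}\abs{p}^2)$. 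As the transport speed $\abs{V_1^{\a}}$ is bounded uniformly in $\c$, the perturbations $f^{\a}-F^{\a}$, $E_2$, $B$, and both charge densities $\rho,\rho^{\infty}$ stay supported in $\abs{x}\le R(T)$ for $t\le T$.

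The crux is the estimate on $E_2$ and $B$, which I expect to be the main obstacle. Introduce the Riemann invariants $W^{\pm}=E_2\pm B$, which by the Maxwell block satisfy $(\dt\pm\c\p_x)W^{\pm}=-4\pi j_2$ with $W^{\pm}(0,\cdot)=0$; integrating along the fast characteristics gives
\[
W^{\pm}(t,x)=-4\pi\int_0^t j_2\big(s,\,x\mp\c(t-s)\big)\,\ud{s}.
\]
Substituting $y=x\mp\c(t-s)$ turns this into $\c^{-1}$ times an integral of $j_2$ over a $y$-interval of length $\c t$, and this $\c^{-1}$ gain is decisive. In the near field $\abs{y}\le R(T)$ one has $\abs{j_2}\le C(T)$ uniformly in $\c$ (from Theorem \ref{main 1} and the momentum support), on a set of $\c$-independent measure; in the far field $\abs{y}\ge R(T)$ one has $f^{\a}=F^{\a}$, so $j_2=\sum_{\a}e^{\a}\int\big(V_2^{\a}-\frac{p_2}{m^{\a}}\big)F^{\a}\,\ud{p}=O(\c^{-2})$, the vanishing of the nonrelativistic piece being exactly assumption (\ref{wt j7}). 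Hence $\abs{W^{\pm}}\ls\c^{-1}\big(R(T)+\c t\cdot\c^{-2}\big)\ls\c^{-1}$, giving $\lnm{E_2(t)}+\lnm{B(t)}\ls\c^{-1}$ unconditionally.

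Next I would compare the two flows. Writing $f^{\a}(t,x,p)=f_0^{\a}(Z^{\a}(0))$ and $f^{\a,\infty}(t,x,p)=f_0^{\a}(Z^{\a,\infty}(0))$ for the backward characteristic maps emanating from $(t,x,p)$ and estimating $\frac{d}{ds}\abs{\delta Z}$ with $\delta Z=Z^{\a}-Z^{\a,\infty}$, the driving terms are the velocity mismatch $O(\c^{-2})$ from (ii), the field difference $E_1-E_1^{\infty}$, the displacement term $\lnm{\p_x E_1^{\infty}}\abs{\delta X}$ (finite since $\rho^{\infty}$ is bounded by the regularity from \cite{19}), and the genuinely relativistic forces $E_2$ and $\c^{-1}V^{\a}B$, now known to be $O(\c^{-1})$. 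This produces $\frac{d}{ds}\abs{\delta Z}\ls\abs{\delta Z}+\mathcal{A}(s)+\c^{-1}$ with $\delta Z(t)=0$, so Gronwall gives $\abs{\delta Z(0)}\ls\int_0^t\mathcal{A}(s)\,\ud{s}+\c^{-1}$ and hence $\lnm{f^{\a}(t)-f^{\a,\infty}(t)}\ls\int_0^t\mathcal{A}(s)\,\ud{s}+\c^{-1}$.

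Finally, for the electric field I would use the representation
\[
E_1(t,x)-E_1^{\infty}(t,x)=2\pi\int_{-\infty}^x(\rho-\rho^{\infty})\,\ud{y}-2\pi\int_x^{\infty}(\rho-\rho^{\infty})\,\ud{y},
\]
valid up to an $O(\c^{-2})$ drift at $\pm\infty$ coming again from (\ref{wt j7}); together with the compact $x$-support of $\rho-\rho^{\infty}$ and $\abs{\rho-\rho^{\infty}}\ls\sum_{\a}\lnm{f^{\a}-f^{\a,\infty}}$, this gives $\lnm{E_1(t)-E_1^{\infty}(t)}\ls\int_0^t\mathcal{A}(s)\,\ud{s}+\c^{-1}$. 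Combining the three estimates yields $\mathcal{A}(t)\ls\int_0^t\mathcal{A}(s)\,\ud{s}+\c^{-1}$, and Gronwall closes the argument with $\mathcal{A}(T)\ls\c^{-1}$ on $[0,T]$. The main difficulty remains the $E_2,B$ bound: one must simultaneously exploit the $\c^{-1}$ gain from the fast wave propagation and the finite propagation speed of the Vlasov part (so that $j_2$ is effectively compactly supported, up to an $O(\c^{-2})$ far-field tail), since a naive bound only yields $O(1)$.
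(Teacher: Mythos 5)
Your overall skeleton matches the paper's: a near/far-field splitting of $j_2$, the $c^{-1}$ gain from integrating $(\partial_t\pm c\partial_x)(E_2\pm B)=-4\pi j_2$ along light-cone characteristics, assumption (\ref{wt j7}) to make the far-field current $O(c^{-2})$, and a final Gronwall loop through $f^{\alpha}-f^{\alpha,\infty}$ and $E_1-E_1^{\infty}$. But your treatment of the far field rests on a false claim that hides the real difficulty. You assert that $f^{\alpha}-F^{\alpha}$, $E_2$, $B$ and $\rho$ ``stay supported in $|x|\le R(T)$'' because the particle transport speed is bounded uniformly in $c$. This is wrong for the fields: $E_2$ and $B$ propagate at speed $c$, so with a nonzero source $j_2$ in the near field they are nonzero out to distance $\sim R+ct$ by time $t$; no $c$-independent compact support exists. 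Consequently it is also wrong that $f^{\alpha}=F^{\alpha}$ exactly for $|x|\ge R(T)$: a far-field particle feels these incoming waves (and the $E_1$ generated in the far field by the $O(c^{-2})$ current there), so along its characteristic $P^{\alpha}(0;t,x,p)\neq p$ and $f^{\alpha}(t,x,p)=F^{\alpha}(P^{\alpha}(0;t,x,p))\neq F^{\alpha}(p)$. Your key far-field bound $|j_2|=O(c^{-2})$ is therefore unjustified, and the argument as written is circular: smallness of $j_2$ in the far field requires smallness of the fields in the far field, which is precisely what you are trying to prove.

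This is exactly where the paper's Lemma \ref{lemma 3} does real work. It introduces the expanding region $\Lambda=\{(t,x):t\in[0,T],\ |x|\ge D_0(1+t/m_0)\}$, which is invariant under backward particle characteristics, and proves the quantitative substitute for your exact-support claim: for $(t,x)\in\Lambda$ one has $|f^{\alpha}-F^{\alpha}|\le C_0\big(\nm{E_1}_{\Lambda}+\nm{E_2}_{\Lambda}+\nm{B}_{\Lambda}\big)$, hence $|j|\le C_0c^{-2}+C_0\big(\nm{E_1}_{\Lambda}+\nm{E_2}_{\Lambda}+\nm{B}_{\Lambda}\big)$ in $\Lambda$; then $E_1$ in $\Lambda$ is controlled by time-integration of $j_1$ (using $E_1(0,\cdot)=0$ there), while $E_2,B$ are controlled everywhere by your light-cone integrals with the split: the complement of $\Lambda$ has $c$-independent spatial measure and bounded $j_2$, giving $C_0c^{-1}$ after the change of variables, and the $\Lambda$-portion produces $\int_0^t\big(\nm{E_1(s)}_{\Lambda}+\nm{E_2(s)}_{\Lambda}+\nm{B(s)}_{\Lambda}\big)\ud{s}$. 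A coupled Gronwall inequality for $\nm{E_1(t)}_{\Lambda}+\nm{E_2(t)}_{\Lambda}+\nm{B(t)}_{\Lambda}$ then yields the $O(c^{-1})$ bound, and only afterwards does one get $\lnm{E_2}+\lnm{B}\le C_0c^{-1}$ globally. Your remaining steps (the flow comparison, which needs $\lnm{\partial_xE_1^{\infty}}\le C_0$ and is the analogue of the paper's Lemma \ref{lemma 4} using bounded $\partial_xf^{\alpha,\infty},\partial_pf^{\alpha,\infty}$ from Theorem \ref{limiting bounds}, and the closing Gronwall) are sound modulo this lemma; note also that for $E_1-E_1^{\infty}$ it is cleaner to integrate $\partial_t(E_1-E_1^{\infty})=-4\pi(j_1-j_1^{\infty})$ in time, as the paper does, since your spatial representation via $\rho-\rho^{\infty}$ again requires far-field control that you have not established.
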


In the following, we will use $\lnm{\cdot}$ to represent $L^{\infty}$ norm either in $(t,x,p)\in[0,T]\times\r\times\r^2$, $(t,x)\in[0,T]\times\r$, or $(x,p)\in\r\times\r^2$. We will use $C_0$ to indicate a universal constant, which may change from line to line and may depend on $T$ and initial data, but not on $c$.

The global existence of smooth solutions of (\ref{simple system}) was established in ~\cite{10}.  This was extended to two dimensions in ~\cite{11} and ~\cite{12}, but remains open in three dimensions.  However, it was shown in ~\cite{13} that solutions of the three dimensional problem can break down only if particle speeds approach the speed of light.  The global existence of smooth solutions for the Vlasov-Poisson system is better understood, see ~\cite{15,18,23,24}.  Also see ~\cite{9} for a general reference on mathematical kinetic theory.

It has been suggested ~\cite{14} that when studying a nonrelativistic limit it is desirable to keep the speed of light constant and analyze the limiting behavior in some other parameter.  While this framework is appealing, it was not clear what other parameter to use that would not complicate both the analysis and comparison with papers such as ~\cite{1,2,8,17,28,29,27}.

This paper is organized as follows:  The proof of Theorem \ref{main 1} is in section 2.  Section 3 contains the proof of Theorem \ref{main 2}. The assumptions of Theorem \ref{main 1} are in force in section 2 and the assumptions of Theorem \ref{main 2} are in force in section 3.

Let us also define the characteristics of the Vlasov equation $(X^{\a}(s;t,x,p),P^{\a}(s;t,x,p))$ by

\begin{eqnarray}
\frac{dX^{\a}}{ds}&=&V^{\a}_1(P^{\a}),\\\
\frac{dP^{\a}_1}{ds}&=&E_1(s,X^{\a})+c^{-1}V^{\a}_2(P^{\a})B(s,X^{\a}),\\
\frac{dP^{\a}_2}{ds}&=&E_2(s,X^{\a})-c^{-1}V^{\a}_1(P^{\a})B(s,X^{\a}),
\end{eqnarray}
with $X^{\a}(t,t,x,p)=x,$ and $P^{\a}(t,t,x,p)=p$.

%%%%%%%%%%%%%%%%%%%%%%%%%%%%%%%%%%%%%%%%%%%%%%%%%%%%%%%%%%%%%%%%%%%%%%%%
\section{Well-Posedness of the Relativistic Vlasov-Maxwell System}%%%%%%%%%%
%%%%%%%%%%%%%%%%%%%%%%%%%%%%%%%%%%%%%%%%%%%%%%%%%%%%%%%%%%%%%%%%%%%%%%%%

The global existence stated in Theorem \ref{main 1} follows from the global existence result of ~\cite{10} by a finite speed of propagation argument.  Note that it was assumed in ~\cite{10} that the initial data had compact support.  So to construct the solution on $(t,x)\in [0,T]\times[-L,L]$ (with $L>R_0$) with initial data $f^{\a}_0$, $E_{2,0}$, $B_0$ as in Theorem \ref{main 1}, let $\overline{f}^{\a}_0,\overline{E}_{2,0},\overline{B}_0$ be smooth and satisfy
\begin{eqnarray}
\overline{f}^{\a}_0=f^{\a}_0,\quad \overline{E}_{2,0}=E_{2,0}\quad\overline{B}_0=B_0
\end{eqnarray}
if $|x|\leq L+cT$,
\begin{eqnarray}
\overline{f}^{\a}_0=\overline{E}_{2,0}=\overline{B}_0=0
\end{eqnarray}
if $|x|\geq L+cT+1$ and
\begin{eqnarray}
\sum_{\a}e^{\a}\int_{\mathbb{R}^2}\overline{f}^{\a}_0dp=0
\end{eqnarray}
if $|x|\geq L+cT$.  By ~\cite{10} (\ref{simple system}) possesses a global $C^1$ solution $\overline{f}^{\a},\overline{E}_1,\overline{E}_2,\overline{B}$ with initial data $\overline{f}^{\a}_0,\overline{E}_{2,0},\overline{B}_0$.  Since increasing $L$ and $T$ will not change $\overline{f}^{\a},\overline{E}_1,\overline{E}_2,\overline{B}$ on the set $\{(t,x):0\leq t$ and $|x|\leq L+c(T-t)\}$, it follows that
\begin{eqnarray}
(f^{\a},E_1,E_2,B)=\lim_{L,T\rightarrow\infty}(\overline{f}^{\a},\overline{E}_1,\overline{E}_2,\overline{B})
\end{eqnarray}
is a global smooth solution of (\ref{simple system}).

Note that by use of the characteristics of the Vlasov equation it follows that
\begin{eqnarray}
0\leq f^{\a}\leq \max{f^{\a}_0}\leq C_0.
\end{eqnarray}

%%%%%%%%%%%%%%%%%%%%%%%%%%%%%%%%%%%%%%%%%%%%%%%%%%%%%%%%%%%%%%%%%%%%%%%%
\subsection{Estimate of $E_2$ and $B$}
%%%%%%%%%%%%%%%%%%%%%%%%%%%%%%%%%%%%%%%%%%%%%%%%%%%%%%%%%%%%%%%%%%%%%%%%

\begin{lemma}\label{lemma 1}
We have
\begin{eqnarray}
\lnm{E_2}+\lnm{B}&\leq&C_0
\end{eqnarray}
\end{lemma}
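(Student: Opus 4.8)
The plan is to diagonalize the Maxwell subsystem $\dt E_2 + \c\p_x B = -4\pi j_2$, $\dt B + \c\p_x E_2 = 0$ by passing to the Riemann-type variables $E_2 + B$ and $E_2 - B$. Adding and subtracting the two equations gives the scalar transport equations
\[
(\dt + \c\p_x)(E_2 + B) = -4\pi j_2, \qquad (\dt - \c\p_x)(E_2 - B) = -4\pi j_2,
\]
so each Riemann variable is constant along the light rays $s \mapsto x \mp \c(t-s)$ up to the accumulated source $j_2$. Since $E_2 = \half\big[(E_2+B)+(E_2-B)\big]$ and $B = \half\big[(E_2+B)-(E_2-B)\big]$, it suffices to bound each variable in $L^\infty$.

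Next I would integrate along the characteristics back to $t=0$, obtaining
\[
(E_2 + B)(t,x) = (E_{2,0}+B_0)(x - \c t) - 4\pi\int_0^t j_2\big(s, x - \c(t-s)\big)\,\ud s
\]
and the mirror formula for $E_2 - B$. The data term is harmless: $E_{2,0}, B_0$ are $C^2$ and, by (\ref{wt j2}), supported in $\abs{x}\le R_0$, so $\lnm{E_{2,0}} + \lnm{B_0} \le C_0$. Everything therefore reduces to a $\c$-uniform bound on the light-ray integral of $j_2$.

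To control $j_2 = \sum_\a e^\a\int_{\r^2} V_2^\a f^\a\,\ud p$, the key observation is that $\abs{V_2^\a(p)} = \abs{p_2}/\sqrt{(m^\a)^2 + \c^{-2}p^2} \le \abs{p_2}/m^\a$, a bound independent of $\c$ once the momentum support is controlled. Combined with $0\le f^\a\le C_0$ and the compactness of the $p$-support, this yields $\lnm{j_2}\le C_0$ (polynomial in the momentum bound), whence $\int_0^t \abs{j_2}\,\ud s \le T\,\lnm{j_2}\le C_0$ and Lemma \ref{lemma 1} follows. I would also record the sharper fact that the $x$-support of $j_2$ spreads only at the particle speed $\le \max_\a \abs{p_1}/m^\a < \c$, so a light ray meets it for an $s$-interval of length $O(\c^{-1})$; this refinement is what will later produce the $O(\c^{-1})$ decay of $E_2, B$ in Theorem \ref{main 2}, but is not needed here.

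The main obstacle is precisely the $\c$-uniform bound on the momentum support that the previous step takes for granted. Along the characteristics one has $\abs{\tfrac{d}{ds}P^\a} \le \abs{E_1} + \abs{E_2} + 2\abs{B}$ (using $\c^{-1}\abs{V_j^\a}\le 1$), so the momentum support at time $t$ is controlled by $Q_0 + \int_0^t(\lnm{E_1} + \lnm{E_2} + 2\lnm{B})\,\ud s$ — which couples the desired bound on $E_2, B$ back to itself and to $E_1$. I expect to close this coupling by a continuity (bootstrap) argument together with a companion $L^\infty$ bound on $E_1$: one posits a bound for $\lnm{E_2}+\lnm{B}$ on a maximal subinterval, feeds it into the momentum estimate and hence into $\lnm{j_2}$, and uses the light-ray integral (with the crossing smallness above) in a Gronwall inequality to recover the same bound with a better constant, thereby extending the interval to all of $[0,T]$.
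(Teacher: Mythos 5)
Your reduction is the same as the final step of the paper's proof: diagonalize into the Riemann variables $E_2\pm B$, integrate along the light rays, and reduce everything to a $c$-uniform bound on $\int_0^t \abs{j_2}(\tau,x\mp c(t-\tau))\ud{\tau}$. The gap is in how you bound this integral. You bound $j_2$ pointwise by a polynomial in the momentum support radius $Q(t)$ (one factor $Q/m^{\a}$ from $\abs{V_2^{\a}}$ and $Q^2$ from the area of the support, so $\lnm{j_2}\leq C_0Q^3$), and then try to close the coupled system for $Q$, $\lnm{E_1}$, $\lnm{E_2}+\lnm{B}$ by a bootstrap. But the resulting integral inequality is supercritical: schematically $Q(t)\leq C_0+C_0\int_0^t\int_0^s\big(Q(\tau)+Q(\tau)^3\big)\ud{\tau}\ud{s}$, and the comparison ODE $u'\sim C_0T(u+u^3)$ blows up in finite time. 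Since there is no small parameter anywhere in the scheme (the lemma must hold for every $T$ and every $c\geq1$, with $C_0$ independent of $c$), the bootstrap cannot ``recover the same bound with a better constant''; it yields at best a short-time result. The crossing-time refinement you mention does not rescue it: the time a light ray spends in the particle cone is of order $(R_0+\sup\abs{V_1^{\a}}\,T)/(c-\sup\abs{V_1^{\a}})$, and within the bootstrap $\sup\abs{V_1^{\a}}$ is only known to be $<c$; for $c$ of order one and large momenta the denominator degenerates, so this supplies no smallness uniformly in $c\geq 1$. (Also, $j_2$ has no compact $x$-support here: outside the particle cone it equals $\sum_{\a}e^{\a}\int F^{\a}V_2^{\a}\ud{p}=O(c^{-2})$ by (\ref{wt j7}), not $0$ --- though you correctly flag this point as inessential.)

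The missing idea is the paper's energy-flux estimate, which bounds the light-ray integral of $j_2$ with no reference to momentum support, to $E_1$, or to any bootstrap. One subtracts the rest-mass energy from the conserved quantities, setting $\ee=\e-c^2\sum_{\a}\int m^{\a}f^{\a}\ud{p}$ and $\mm=\m-c^2\sum_{\a}\int m^{\a}f^{\a}V_1^{\a}\ud{p}$, checks $\dt\ee+\p_x\mm=0$, and integrates this identity over the backward light cone with vertex $(t,x)$. The base contributes $c^{-1}\int_{x-ct}^{x+ct}\ee(0,y)\ud{y}\leq C_0$ uniformly in $c$, because $\ee(0,\cdot)$ is pointwise comparable to the kinetic energy density $\sum_{\a}\frac{1}{2m^{\a}}\int f_0^{\a}p^2\ud{p}$ (the prefactor $c^{-1}$ exactly compensates the length $2ct$ of the base); the lateral fluxes satisfy $\ee\pm c^{-1}\mm\geq k_{\pm}\geq 0$ with $k_{\pm}=c^2\sum_{\a}\int f^{\a}(\g^{\a}-m^{\a})(1\pm c^{-1}V_1^{\a})\ud{p}$, which yields $\int_0^t k_{\pm}(\tau,x\mp c(t-\tau))\ud{\tau}\leq C_0$. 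The second key point is the pointwise inequality $\abs{j_2}\leq C_0(1+k_{\pm})$, proved by splitting momentum space into $\abs{p}\leq1$, $1\leq\abs{p}\leq c$, $\abs{p}\geq c$ and checking $\sigma_{\pm}^{\a}=c^2(\g^{\a}-m^{\a})(1\pm c^{-1}V_1^{\a})\geq C_0^{-1}\abs{V_2^{\a}}$ on the two outer regions. Together these give the $c$-uniform bound on $\int_0^t\abs{j_2}$ along light rays, after which your final step applies verbatim. Note the logical order in the paper: Lemma \ref{lemma 1} is proved first by this energy argument, and only afterwards (Lemma \ref{lemma 2}) are $Q(t)$ and $\lnm{E_1}$ bounded using it; your proposal inverts this order, which is exactly why it cannot close.
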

\begin{proof}
We divide the proof into several steps:\\
\ \\
Step 1: Relativistic energy estimate.\\
We multiply $\sqrt{(m^{\a})^2+\c^{-2}p^2}$ on both sides of the Vlasov equation, sum up over $\a$, and integrate over $p\in\r^2$ to obtain
\begin{eqnarray}\label{wt 01}
\sum_{\a}\int_{\r^2}\dt f^{\a}\sqrt{(m^{\a})^2+\c^{-2}p^2}\ud{p}+\sum_{\a}\int_{\r^2}p_1\p_xf^{\a}\ud{p}\\
+\sum_{\a}\int_{\r^2}e^{\a}\left(E_1\sqrt{(m^{\a})^2+\c^{-2}p^2}+\c^{-1}p_2B\right)\p_{p_1}f^{\a}\ud{p}\no\\
+\sum_{\a}\int_{\r^2}e^{\a}\left(E_2\sqrt{(m^{\a})^2+\c^{-2}p^2}-\c^{-1}p_1B\right)\p_{p_2}f^{\a}\ud{p}&=&0.\no
\end{eqnarray}
Integrating by parts in (\ref{wt 01}), we get
\begin{eqnarray}
\dt\bigg(\sum_{\a}\int_{\r^2}f^{\a}\sqrt{(m^{\a})^2+\c^{-2}p^2}\ud{p}\bigg)+\p_x\bigg(\sum_{\a}\int_{\r^2}p_1f^{\a}\ud{p}\bigg)\\
-\sum_{\a}\int_{\r^2}e^{\a}E_1\frac{\c^{-2}p_1}{\sqrt{(m^{\a})^2+\c^{-2}p^2}}f^{\a}\ud{p}
-\sum_{\a}\int_{\r^2}e^{\a}E_2\frac{\c^{-2}p_2}{\sqrt{(m^{\a})^2+\c^{-2}p^2}}f^{\a}\ud{p}&=&0,\no
\end{eqnarray}
which further implies
\begin{eqnarray}\label{wt 02}
\dt\bigg(\sum_{\a}\int_{\r^2}f^{\a}\sqrt{(m^{\a})^2+\c^{-2}p^2}\ud{p}\bigg)+\p_x\bigg(\sum_{\a}\int_{\r^2}p_1f^{\a}\ud{p}\bigg)\\
-E_1\sum_{\a}\int_{\r^2}e^{\a}\c^{-2}f^{\a}V^{\a}_1(p)\ud{p}
-E_2\sum_{\a}\int_{\r^2}e^{\a}\c^{-2}f^{\a}V^{\a}_2(p)\ud{p}&=&0.\no
\end{eqnarray}
Based on the definition of $j(t,x)$, from (\ref{wt 02}), we deduce that
\begin{eqnarray}\label{wt 03}
\dt\bigg(\c^2\sum_{\a}\int_{\r^2}f^{\a}\sqrt{(m^{\a})^2+\c^{-2}p^2}\ud{p}\bigg)+\p_x\bigg(\c^2\sum_{\a}\int_{\r^2}f^{\a}p_1\ud{p}\bigg)
-(E_1j_1+E_2j_2)&=&0.
\end{eqnarray}
Multiplying $E_1$, $E_2$ and $B$ on the corresponding Maxwell equations, we obtain
\begin{eqnarray}
E_1\dt E_1&=&-4\pi E_1j_1,\\
E_2\dt E_2+\c E_2\p_{x}B&=&-4\pi E_2j_2,\\
B\dt B+\c B\p_{x}E_2&=&0.
\end{eqnarray}
Summing them up yields
\begin{eqnarray}\label{wt 04}
\half\dt(E_1^2+E_2^2+B^2)+\c \p_x(E_2B)=-4\pi\bigg(E_1j_1+E_2j_2\bigg).
\end{eqnarray}
Substituting (\ref{wt 04}) into (\ref{wt 03}), we have
\begin{eqnarray}\label{wt 05}
\dt\bigg(\c^2\sum_{\a}\int_{\r^2}f^{\a}\sqrt{(m^{\a})^2+\c^{-2}p^2}\ud{p}\bigg)+\p_x\bigg(\c^2\sum_{\a}\int_{\r^2}f^{\a}p_1\ud{p}\bigg)\\
+\frac{1}{8\pi}\dt\bigg(E_1^2+E_2^2+B^2\bigg)+\frac{\c}{4\pi}\p_x(E_2B)&=&0.\no
\end{eqnarray}
Based on (\ref{wt 05}), we define
\begin{eqnarray}
\e&=&\c^2\bigg(\sum_{\a}\int_{\r^2}f^{\a}\sqrt{(m^{\a})^2+\c^{-2}p^2}\ud{p}\bigg)+\frac{1}{8\pi}\bigg(E_1^2+E_2^2+B^2\bigg),\\
\m&=&\c^2\bigg(\sum_{\a}\int_{\r^2}f^{\a}p_1\ud{p}\bigg)+\frac{\c}{4\pi}(E_2B),
\end{eqnarray}
which satisfies
\begin{eqnarray}\label{wt j1}
\dt\e+\p_x\m=0.
\end{eqnarray}
In [10] (\ref{wt j1}) was the crucial ingredient.  Since we need bounds independent of $c$ here, we further define
\begin{eqnarray}
\ee&=&\e-\c^2\bigg(\sum_{\a}\int_{\r^2}m^{\a}f^{\a}\ud{p}\bigg),\\
\mm&=&\m-\c^2\bigg(\sum_{\a}\int_{\r^2}m^{\a}f^{\a}V^{\a}_1(p)\ud{p}\bigg),
\end{eqnarray}
and use the Vlasov equation and integration by parts to verify that
\begin{eqnarray}\label{wt 06}
\dt\ee+\p_x\mm&=&(\dt\e+\p_x\m)-\c^2\sum_{\a}\int_{\r^2}m^{\a}\bigg(\dt f^{\a}+V^{\a}_1(p)\p_xf^{\a}\bigg)\ud{p}\\
&=&\c^2\sum_{\a}\int_{\r^2}m^{\a}\bigg(e^{\a}\left(E_1+\c^{-1}V^{\a}_2(p)B\right)\p_{p_1}f^{\a}+e^{\a}\left(E_2-\c^{-1}V^{\a}_1(p)B\right)\p_{p_2}f^{\a}\bigg)\ud{p}\no\\
&=&-\c^2\sum_{\a}\int_{\r^2}m^{\a}\bigg(-e^{\a}\frac{\c^{-3}Bp_1p_2}{(\sqrt{(m^{\a})^2+\c^{-2}p^2})^3}f^{\a}
+e^{\a}\frac{\c^{-3}Bp_1p_2}{(\sqrt{(m^{\a})^2+\c^{-2}p^2})^3}f^{\a}\bigg)\ud{p}\no\\
&=&0.\no
\end{eqnarray}
\ \\
Step 2: Characteristic triangle.\\
We consider the point $(t,x)\in[0,T]\times\r$ in the time-space plane and the triangle bounded by $\tau=0$, $y=x-\c(t-\tau)$ and $y=x+\c(t-\tau)$ for $\tau\in[0,t]$. Integrating (\ref{wt 06}) over this triangular region and applying the divergence theorem we find that
\begin{eqnarray}\label{wt 07}
0&=&\int_0^t\int_{x-\c(t-\tau)}^{x+\c(t-\tau)}(\p_{\tau}\ee+\p_y\mm)\ud{y}\ud{\tau}=L+M+N,
\end{eqnarray}
where
\begin{eqnarray}
L&=&\int_{x-\c t}^{x+\c t}\bigg(\br{\ee,\mm}\bigg|_{(0,y)}\cdot\br{-1,0}\bigg)\ud{y},\\
M&=&\int_0^t\bigg(\br{\ee,\mm}\bigg|_{(\tau,x+\c(t-\tau))}\cdot\frac{\br{\c,1}}{\sqrt{1+\c^2}}\bigg)\sqrt{1+\c^2}\ud{\tau},\\
N&=&\int_0^t\bigg(\br{\ee,\mm}\bigg|_{(\tau,x-\c(t-\tau))}\cdot\frac{\br{\c,-1}}{\sqrt{1+\c^2}}\bigg)\sqrt{1+\c^2}\ud{\tau}.
\end{eqnarray}
Simplifying (\ref{wt 07}), we obtain
\begin{eqnarray}
\int_{x-\c t}^{x+\c t}\ee(0,y)\ud{y}&=&\int_0^t\bigg(\c\ee+\mm\bigg)(\tau,x+\c(t-\tau))\ud{\tau}+\int_0^t\bigg(\c\ee-\mm\bigg)(\tau,x-\c(t-\tau))\ud{\tau},
\end{eqnarray}
which further implies
\begin{eqnarray}\label{wt 08}
\\
\c^{-1}\int_{x-\c t}^{x+\c t}\ee(0,y)\ud{y}&=&\int_0^t\bigg(\ee+\c^{-1}\mm\bigg)(\tau,x+\c(t-\tau))\ud{\tau}+\int_0^t\bigg(\ee-\c^{-1}\mm\bigg)(\tau,x-\c(t-\tau))\ud{\tau}.\no
\end{eqnarray}
We can simply denote (\ref{wt 08}) as $I=II+III$. Then we need to estimate $I$, $II$ and $III$.\\
\ \\
Step 3: Estimate of $I$.\\
Note that
\begin{eqnarray}
\ee&=&\c^2\bigg(\sum_{\a}\int_{\r^2}f^{\a}\sqrt{(m^{\a})^2+\c^{-2}p^2}\ud{p}\bigg)+\frac{1}{8\pi}\bigg(E_1^2+E_2^2+B^2\bigg)
-\c^2\bigg(\sum_{\a}\int_{\r^2}m^{\a}f^{\a}\ud{p}\bigg)\\
&=&\c^2\bigg(\sum_{\a}\int_{\r^2}f^{\a}\left(\sqrt{(m^{\a})^2+\c^{-2}p^2}-m^{\a}\right)\ud{p}\bigg)+\frac{1}{8\pi}\bigg(E_1^2+E_2^2+B^2\bigg)\no\\
&=&\c^2\Bigg(\sum_{\a}\int_{\r^2}f^{\a}\left(\frac{\c^{-2}p^2}{\sqrt{(m^{\a})^2+\c^{-2}p^2}+m^{\a}}\right)\ud{p}\Bigg)+\frac{1}{8\pi}\bigg(E_1^2+E_2^2+B^2\bigg)\no\\
&\leq&\Bigg(\sum_{\a}\frac{1}{2m^{\a}}\int_{\r^2}\abs{f^{\a}}p^2\ud{p}\Bigg)+\frac{1}{8\pi}\bigg(E_1^2+E_2^2+B^2\bigg).\no
\end{eqnarray}
Therefore, we have
\begin{eqnarray}\label{wt 10}
I&=&\c^{-1}\int_{x-\c t}^{x+\c t}\ee(0,y)\\
&\leq&\c^{-1}\int_{x-\c t}^{x+\c t}\Bigg(\sum_{\a}\frac{1}{2m^{\a}}\int_{\r^2}\abs{f^{\a}_0}p^2\ud{p}\Bigg)\ud{y}+\c^{-1}\int_{x-\c t}^{x+\c t}\frac{1}{8\pi}\bigg(E_{1,0}^2+E_{2,0}^2+B_0^2\bigg)\ud{y}\no\\
&\leq&2t\max_{x}\Bigg\{\sum_{\a}\frac{1}{2m^{\a}}\int_{\r^2}\abs{f^{\a}_0}p^2\ud{p}\Bigg\}+\frac{t}{4\pi}\max_{x}\bigg\{E_{1,0}^2+E_{2,0}^2+B_0^2\bigg\}\no\\
&\leq&C_0,\no
\end{eqnarray}
which is uniform in $\c$.\\
\ \\
Step 4: Estimate of $II$ and $III$.\\
Note that
\begin{eqnarray}\label{wt 09}
\\
\ee\pm\c^{-1}\mm&=&\Bigg(\c^2\bigg(\sum_{\a}\int_{\r^2}f^{\a}\sqrt{(m^{\a})^2+\c^{-2}p^2}\ud{p}\bigg)+\frac{1}{8\pi}\bigg(E_1^2+E_2^2+B^2\bigg)
-\c^2\bigg(\sum_{\a}\int_{\r^2}m^{\a}f^{\a}\ud{p}\bigg)\Bigg)\no\\
&&\pm\c^{-1}\Bigg(\c^2\bigg(\sum_{\a}\int_{\r^2}f^{\a}p_1\ud{p}\bigg)+\frac{\c}{4\pi}(E_2B)-\c^2\bigg(\sum_{\a}\int_{\r^2}m^{\a}f^{\a}V^{\a}_1(p)\ud{p}\bigg)\Bigg)\no\\
&=&\Bigg(\c^2\sum_{\a}\int_{\r^2}f^{\a}\left(\sqrt{(m^{\a})^2+\c^{-2}p^2}-m^{\a}\pm\c^{-1}(p_1-m^{\a}V_1^{\a})\right)\ud{p}\Bigg)\no\\
&&+\frac{1}{8\pi}\bigg(E_1^2+E_2^2+B^2\pm 2E_2B\bigg)\no\\
&=&\Bigg(\c^2\sum_{\a}\int_{\r^2}f^{\a}\left(\sqrt{(m^{\a})^2+\c^{-2}p^2}-m^{\a}\right)\left(1\pm\c^{-1}V^{\a}_1\right)\ud{p}\Bigg)+\frac{1}{8\pi}\bigg(E_1^2+(E_2\pm B)^2\bigg).\no
\end{eqnarray}
Since
\begin{eqnarray}
E_1^2+(E_2\pm B)^2\geq0,
\end{eqnarray}
\begin{eqnarray}
\abs{\c^{-1}V^{\a}_1}=\abs{\frac{\c^{-1}p_1}{\sqrt{(m^{\a})^2+\c^{-2}p^2}}}\leq \abs{\frac{\c^{-1}p}{\sqrt{(m^{\a})^2+\c^{-2}p^2}}}<1,
\end{eqnarray}
\begin{eqnarray}
\sqrt{(m^{\a})^2+\c^{-2}p^2}-m^{\a}\geq0,
\end{eqnarray}
and $f^{\a}\geq0$, we deduce
\begin{eqnarray}
\ee\pm\c^{-1}\mm\geq\c^2\sum_{\a}\int_{\r^2}f^{\a}\left(\sqrt{(m^{\a})^2+\c^{-2}p^2}-m^{\a}\right)\left(1\pm\c^{-1}V^{\a}_1\right)\ud{p}\geq0.
\end{eqnarray}
Define
\begin{eqnarray}
\g^{\a}=\sqrt{(m^{\a})^2+\c^{-2}p^2}.
\end{eqnarray}
Then based on (\ref{wt 09}), we have
\begin{eqnarray}
II&=&\int_0^t\bigg(\ee+\c^{-1}\mm\bigg)(\tau,x+\c(t-\tau))\ud{\tau}\label{wt 11}\\
&\geq&\int_0^t\Bigg(\c^2\sum_{\a}\int_{\r^2}f^{\a}\left(\g^{\a}-m^{\a}\right)\left(1+\c^{-1}V^{\a}_1\right)\ud{p}\Bigg)(\tau,x+\c(t-\tau))\ud{\tau}\geq0,\no\\
III&=&\int_0^t\bigg(\ee-\c^{-1}\mm\bigg)(\tau,x-\c(t-\tau))\ud{\tau}\label{wt 12}\\
&\geq&\int_0^t\Bigg(\c^2\sum_{\a}\int_{\r^2}f^{\a}\left(\g^{\a}-m^{\a}\right)\left(1-\c^{-1}V^{\a}_1\right)\ud{p}\Bigg)(\tau,x-\c(t-\tau))\ud{\tau}\geq0.\no
\end{eqnarray}
\ \\
Step 5: Synthesis.\\
Collecting the results in (\ref{wt 10}), (\ref{wt 11}) and (\ref{wt 12}) in (\ref{wt 08}), we obtain
\begin{eqnarray}
0&\leq&\int_0^tk_{+}(\tau,x+\c(t-\tau))\ud{\tau}\leq C_0,\label{wt 17}\\
0&\leq&\int_0^tk_{-}(\tau,x-\c(t-\tau))\ud{\tau}\leq C_0,\label{wt 18}
\end{eqnarray}
where
\begin{eqnarray}
k_{\pm}=\c^2\sum_{\a}\int_{\r^2}f^{\a}\left(\g^{\a}-m^{\a}\right)\left(1\pm\c^{-1}V^{\a}_1\right)\ud{p}.
\end{eqnarray}
This is the starting point for the following estimates. For convenience, we denote
\begin{eqnarray}
\sigma_{\pm}^{\a}=\c^2\left(\g^{\a}-m^{\a}\right)\left(1\pm\c^{-1}V^{\a}_1\right),
\end{eqnarray}
so that
\begin{eqnarray}
k_{\pm}=\sum_{\a}\int_{\r^2}f^{\a}\sigma_{\pm}^{\a}\ud{p}.
\end{eqnarray}
\ \\
Step 6: Estimate of $j_2$.\\
Since
\begin{eqnarray}
\sigma_{\pm}^{\a}&=&\c^2\left(\frac{\c^{-2}p^2}{\g^{\a}+m^{\a}}\right)\left(1\pm\c^{-1}V^{\a}_1\right)=
\left(\frac{p^2}{\g^{\a}+m^{\a}}\right)\left(1\pm\c^{-1}V^{\a}_1\right),
\end{eqnarray}
we directly estimate
\begin{eqnarray}
\sigma_{\pm}^{\a}&=&\left(\frac{p^2}{\sqrt{(m^{\a})^2+\c^{-2}p^2}+m^{\a}}\right)\left(\frac{\sqrt{(m^{\a})^2+\c^{-2}p^2}\pm\c^{-1}p_1}{\sqrt{(m^{\a})^2+\c^{-2}p^2}}\right)\\
&\geq&\left(\frac{p^2}{\sqrt{(m^{\a})^2+\c^{-2}p^2}+m^{\a}}\right)\left(\frac{\sqrt{(m^{\a})^2+\c^{-2}p^2}-\c^{-1}\abs{p_1}}{\sqrt{(m^{\a})^2+\c^{-2}p^2}}\right)\no\\
&=&\left(\frac{p^2}{\sqrt{(m^{\a})^2+\c^{-2}p^2}+m^{\a}}\right)
\left(\frac{(m^{\a})^2+\c^{-2}p^2_2}{\left(\sqrt{(m^{\a})^2+\c^{-2}p^2}\right)\left(\sqrt{(m^{\a})^2+\c^{-2}p^2}+\c^{-1}\abs{p_1}\right)}\right)\no\\
&\geq&\frac{p^2\left((m^{\a})^2+\c^{-2}p^2_2\right)}{4\left(\sqrt{(m^{\a})^2+\c^{-2}p^2}\right)^3}\no\\
&=&\frac{p^2\left((m^{\a})^2+\c^{-2}p^2_2\right)}{4(\g^{\a})^3}.\no
\end{eqnarray}
$\sigma_{\pm}^{\a}$ builds the bridge between $j_2$ and $k_{\pm}$. In detail, we can decompose
\begin{eqnarray}\label{wt 16}
j_2&=&\sum_{\a}\bigg(e^{\a}\int_{\r^2}V^{\a}_2f^{\a}(t,x,p)\ud{p}\bigg)\\
&=&\sum_{\a}\bigg(e^{\a}\int_{\abs{p}\geq\c}V^{\a}_2f^{\a}(t,x,p)\ud{p}\bigg)+\sum_{\a}\bigg(e^{\a}\int_{1\leq\abs{p}\leq\c}V^{\a}_2f^{\a}(t,x,p)\ud{p}\bigg)\no\\
&&+\sum_{\a}\bigg(e^{\a}\int_{\abs{p}\leq1}V^{\a}_2f^{\a}(t,x,p)\ud{p}\bigg).\no
\end{eqnarray}
Here we estimate these three terms separately:
\begin{itemize}
\item
For $\abs{p}\geq\c$, we have
\begin{eqnarray}
\sigma_{\pm}^{\a}&\geq&\frac{p^2\left((m^{\a})^2+\c^{-2}p^2_2\right)}{4(\g^{\a})^3}\geq \frac{p^22m^{\a}c^{-1}|p_2|}{4(\g^{\a})^3}\\
&=&\frac{1}{2}m^{\a}c|V^{\a}_2|\frac{c^{-2}p^2}{(m^{\a})^2+c^{-2}p^2}\geq \frac{1}{2}m^{\a}c|V^{\a}_2|\frac{1}{(m^{\a})^2+1}\geq C_0|V^{\a}_2|.\no
\end{eqnarray}
Hence, we know
\begin{eqnarray}\label{wt 13}
\abs{\sum_{\a}\bigg(e^{\a}\int_{\abs{p}\geq\c}V^{\a}_2f^{\a}(t,x,p)\ud{p}\bigg)}&\leq&
{\sum_{\a}\bigg(|e^{\a}|\int_{\abs{p}\geq\c}\frac{\sigma_{\pm}^{\a}}{C_0}f^{\a}(t,x,p)\ud{p}\bigg)}
\leq C_0k_{\pm}.
\end{eqnarray}

\item
For $1\leq\abs{p}\leq\c$, we have
\begin{eqnarray}
\sigma_{\pm}^{\a}&\geq&\frac{p^2(m^{\a})^2}{4(\g^{\a})^3}\geq \frac{\abs{p_2}}{4\g^{\a}}\frac{{|p|}(m^{\a})^2}{(\g^{\a})^2}\geq \frac{1}{4}|V^{\a}_2|\frac{(m^{\a})^2}{(m^{\a})^2+1}.
\end{eqnarray}
Hence, we know
\begin{eqnarray}\label{wt 14}
\abs{\sum_{\a}\bigg(e^{\a}\int_{1\leq\abs{p}\leq\c}V^{\a}_2f^{\a}(t,x,p)\ud{p}\bigg)}&\leq&
4{\sum_{\a}\bigg(|e^{\a}|\int_{1\leq\abs{p}\leq\c}\frac{(m^{\a})^2+1}{(m^{\a})^2}\sigma_{\pm}^{\a}f^{\a}(t,x,p)\ud{p}\bigg)}\\
&\leq&4\max_{\a}|e^{\a}|{\dfrac{(m^{\a})^2+1}{(m^{\a})^2}}k_{\pm}\leq C_0k_{\pm}.\no
\end{eqnarray}

\item
For $\abs{p}\leq1$, we know
\begin{eqnarray}\label{wt 15}
\abs{\sum_{\a}\bigg(e^{\a}\int_{\abs{p}\leq1}V^{\a}_2f^{\a}(t,x,p)\ud{p}\bigg)}&\leq&
\max_{\a}{\dfrac{1}{m^{\a}}}{\sum_{\a}\bigg(|e^{\a}|\int_{\abs{p}\leq1}f^{\a}(t,x,p)\ud{p}\bigg)}\\
&\leq&C_0{\sum_{\a}\bigg(|e^{\a}|\int_{\abs{p}\leq1}\ud{p}\bigg)}\leq C_0.\no
\end{eqnarray}

\end{itemize}
Collecting the results in (\ref{wt 16}), (\ref{wt 13}), (\ref{wt 14}) and (\ref{wt 15}), we have
\begin{eqnarray}
\abs{j_2}\leq C_0(1+k_{\pm}).
\end{eqnarray}
\ \\
Step 7: Estimate of $E_2$ and $B$.\\
From Maxwell equations, we know
\begin{eqnarray}
\dt E_2+\c\p_{x}B&=&-4\pi j_2,\\
\dt B+\c\p_{x}E_2&=&0.
\end{eqnarray}
Therefore, we have
\begin{eqnarray}\label{wt j17}
\dt(E_2\pm B)\pm\c\p_x(E_2\pm B)=-4\pi j_2.
\end{eqnarray}
Hence, we have
\begin{eqnarray}
(E_2\pm B)(t,x)&=&(E_2\pm B)(0,x\mp\c t)-4\pi\int_0^tj_2(\tau,x\mp\c(t-\tau))\ud{\tau},
\end{eqnarray}
which further implies
\begin{eqnarray}
\abs{(E_2\pm B)(t,x)}&\leq&\abs{(E_2\pm B)(0,x\mp\c t)}+4\pi\int_0^t\abs{j_2}(\tau,x\mp\c(t-\tau))\ud{\tau}\\
&\leq&C_0+C_0\int_0^t(1+k_{\pm})(\tau,x\mp\c(t-\tau))\ud{\tau}.
\end{eqnarray}
Based on (\ref{wt 17}) and (\ref{wt 18}), we conclude
\begin{eqnarray}
\abs{(E_2\pm B)(t,x)}&\leq&C_0,
\end{eqnarray}
which further implies
\begin{eqnarray}
\lnm{E_2}&\leq&C_0,\\
\lnm{B}&\leq&C_0.
\end{eqnarray}
\end{proof}

%%%%%%%%%%%%%%%%%%%%%%%%%%%%%%%%%%%%%%%%%%%%%%%%%%%%%%%%%%%%%%%%%%%%%%%%
\subsection{Estimate of $E_1$ and $f^{\a}$}
%%%%%%%%%%%%%%%%%%%%%%%%%%%%%%%%%%%%%%%%%%%%%%%%%%%%%%%%%%%%%%%%%%%%%%%%

\begin{lemma}\label{lemma 2}
We have
\begin{eqnarray}
\lnm{E_1}+\lnm{f^{\a}}&\leq&C_0.
\end{eqnarray}
Also, for each $t>0$, there exists $Q(t)$ (independent of $c$) such that $f^{\a}(s,x,p)=0$ for any $\abs{p}\geq Q(t)$ and $0\leq s\leq t$.
\end{lemma}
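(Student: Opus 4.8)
The estimate $\lnm{f^\a}\le C_0$ is already in hand, since $0\le f^\a\le\max f_0^\a$ along the characteristics; the real content is the bound on $E_1$ together with the momentum confinement, and these two are coupled. My plan is to reduce everything to a uniform bound on $\lnm{E_1}$. From the characteristic equations together with Lemma \ref{lemma 1} (which gives $\lnm{E_2}+\lnm{B}\le C_0$ and $|\c^{-1}V^\a_1|,|\c^{-1}V^\a_2|<1$) one reads off $|dP^\a_2/ds|\le C_0$, so that $|p_2|\le Q_0+C_0t$ with no dependence on $c$, and $|dP^\a_1/ds|\le|e^\a|(|E_1|+|B|)\le C_0(1+\lnm{E_1})$. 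Hence, once $\lnm{E_1}\le C_0$ is known, every characteristic carrying a nonzero value of $f^\a$—which must start with $|P^\a(0)|\le Q_0$ by (\ref{wt j3})—obeys $|p|\le Q_0+C_0t=:Q(t)$, uniformly in $c$; in particular all occupied velocities satisfy $|V^\a_1|\le Q(t)/\min_\a m^\a$, again uniformly in $c$. So the whole lemma follows once $E_1$ is controlled.

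To bound $E_1$ I would integrate the Maxwell relations $\p_xE_1=4\pi\rho$ and $\dt E_1=-4\pi j_1$ along the same light rays used in Lemma \ref{lemma 1}. Since $(\dt\mp\c\p_x)E_1=-4\pi(j_1\pm\c\rho)$, integrating along $y=x\pm\c(t-\tau)$ gives
\begin{eqnarray*}
E_1(t,x)=E_{1,0}(x\pm\c t)-4\pi\int_0^t(j_1\pm\c\rho)(\tau,x\pm\c(t-\tau))\ud{\tau},
\end{eqnarray*}
and since $E_{1,0}$ is bounded with compact support this reduces the problem to the ray integral of $j_1\pm\c\rho=\sum_\a e^\a\int_{\r^2}(\c\pm V^\a_1)f^\a\ud{p}$. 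The algebraic fact that makes this tractable is the identity $\c\pm V^\a_1=\c(\g^\a+m^\a)(p^2)^{-1}\sigma^\a_\pm$, which expresses the integrand through the same flux density $\sigma^\a_\pm$ that was controlled in Lemma \ref{lemma 1}.

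For $|p|\ge\c$ the weight $\c(\g^\a+m^\a)(p^2)^{-1}$ is bounded uniformly in $c$, so the high-momentum part of the ray integral is dominated by $\int_0^tk_\pm(\tau,x\pm\c(t-\tau))\ud{\tau}\le C_0$ from (\ref{wt 17})--(\ref{wt 18}). I would also exploit that the field part $\frac{1}{8\pi}(E_1^2+(E_2\pm B)^2)$ of $\ee\pm\c^{-1}\mm$ in (\ref{wt 08}) is nonnegative and was simply discarded in Lemma \ref{lemma 1}, so the same triangle estimate furnishes the extra uniform bound $\int_0^tE_1^2(\tau,x\pm\c(t-\tau))\ud{\tau}\le C_0$. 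The low-momentum part is where the explicit factor $\c$ multiplying $\rho$ must be removed by the geometry rather than by brute force: the neutrality hypotheses (\ref{wt j6}) and (\ref{wt j8}) give $\int_\r\rho_0=0$ and, with the momentum confinement of the first paragraph, every occupied velocity is bounded by $Q(t)/\min_\a m^\a\ll\c$, so the charge is supported in a slab $|x|\le R_0+C_0Q(t)t$ that a light ray crosses in time $O(\c^{-1})$; this dwell-time gain cancels the factor $\c$ and leaves a bound independent of $c$.

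The main obstacle is to carry out this last step without losing powers of $Q(t)$. Estimating $\c\rho$ crudely by (momentum volume)$\times$(slab width) costs a factor $Q(t)^3$, and feeding $\lnm{E_1}\ls Q(t)^3$ back into $|dP^\a_1/ds|$ produces a cubic differential inequality for $Q(t)$ that blows up in finite time. To reach every $T>0$ uniformly in $c$ one must split the momentum variable as in Step 6 of Lemma \ref{lemma 1} (the ranges $|p|\ge\c$, $1\le|p|\le\c$, and $|p|\le1$), send the genuinely large momenta through the flux bounds $k_\pm$ and the $L^2$ flux bound on $E_1$—which cost no power of $Q$—and restrict the slow-charge crossing argument to the bounded-momentum range, so that the surviving growth of $Q(t)$ is only linear. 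One then closes the estimate by a continuity argument propagating $\lnm{E_1}$ and $Q(t)$ together on $[0,T]$; the momentum-support statement of the lemma then drops out immediately.
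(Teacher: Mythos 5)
Your opening reduction is fine (and matches the paper's overall loop): $\lnm{f^{\a}}\leq C_0$ is immediate, $Q(t)$ grows linearly in $\sup|E_1|$ by the characteristic ODEs and Lemma \ref{lemma 1}, so everything hinges on a $\c$-independent bound for $E_1$. Your treatment of the high-momentum part of $j_1\pm\c\rho$ via $\c\pm V_1^{\a}=\c\,\sigma_{\pm}^{\a}(\g^{\a}+m^{\a})/p^2$ and (\ref{wt 17})--(\ref{wt 18}) is also correct, as is your observation that the discarded field part of $\ee\pm\c^{-1}\mm$ gives a ray-integrated bound on $E_1^2$. The gap is the step that was supposed to kill the factor $\c$: the claim that the charge is supported in a slab $|x|\leq R_0+C_0Q(t)t$ is false in this non-decaying setting. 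The hypotheses only give $f^{\a}_0=F^{\a}(p)$ for $|x|\geq R_0$; the transverse fields $E_2,B$ (which in Theorem \ref{main 1} are nonzero and bounded only by $C_0$, not small) propagate at speed $\c$ and sweep through the infinite homogeneous background, perturbing the momentum distribution and producing nonzero $\rho(t,\cdot)$ throughout the light cone $|x|\lesssim R_0+\c t$ --- far outside any $\c$-independent slab. Nor can the low-momentum range be fed into $k_{\pm}$: replacing $e^{\a}$ by $|e^{\a}|$ and integrating $|{\cdot}|$ in $p$ destroys the inter-species cancellation (\ref{wt j6}); already at $t=0$ one has $\c\sum_{\a}|e^{\a}|\int_{|p|\leq 1}F^{\a}\ud{p}=O(\c)$ at every point $|x|\geq R_0$, so the "dwell-time'' bookkeeping cannot even start. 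What would rescue the argument is a proof that the far-field deviation from the quiescent state is small enough that $\c\rho$ has $\c$-independent ray integrals --- but that is essentially the content of Lemma \ref{lemma 3}, which is proved \emph{after} Theorem \ref{main 1} (it uses its conclusions) and under the extra hypothesis $E_{2,0}=B_0=0$; invoking it here would be circular. Truncating the data does not help either: the truncated charge is supported in a slab of width $\sim L$, and recovering the true solution at a point requires $L\sim|x|+\c T$, so the dwell-time gain is exactly cancelled.

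The paper's proof avoids light-ray integration of $E_1$ altogether, and the mechanism is worth contrasting with yours. It truncates the data, uses conservation of total charge $\int_{\r}\rho^L(t,x)\ud{x}=0$ to write $E_1^L(t,x)=4\pi\int_{-\infty}^x\rho^L(t,y)\ud{y}$, and then uses the measure-preserving characteristic flow (Liouville) to obtain the identity (\ref{wt 26}),
\begin{eqnarray}
E_1^L(t,x)-E^L_{1,0}(x)&=&4\pi\sum_{\a}e^{\a}\int_{\r}\int_{\r^2}f^{\a,L}_0(y,p)\bigg(\id{X^{\a,L}(t;0,y,p)\leq x}-\id{y\leq x}\bigg)\ud{p}\ud{y}.\no
\end{eqnarray}
The difference of indicators vanishes unless $x$ lies between $y$ and $X^{\a,L}(t;0,y,p)$, and since \emph{particles} (not fields) are slow, $|X^{\a,L}(t;0,y,p)-y|\leq\frac{1}{m^{\a}}\int_0^tQ^L(s)\ud{s}$, only $y$ in a slab of $\c$-independent width about $x$ contributes. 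Note the crucial distinction from your argument: what is confined to a slab is the set of initial positions of particles able to cross the point $x$, not the support of $\rho(t,\cdot)$; no statement about far-field charge is ever needed, and all the spatially non-decaying background cancels identically through the two indicators. This gives the linear bound $\sup_{[0,t]\times\r}|E_1^L|\leq C_0+C_0\int_0^tQ^L(s)\ud{s}$, which combined with your (correct) estimate $Q^L(t)\leq C_0+C_0\sup_{[0,t]\times\r}|E_1^L|$ closes by Gronwall with no loss of powers of $Q$ and no continuity argument; letting $L\rt\infty$ by finite speed of propagation finishes the proof. You would need to replace your slab claim by an argument of this type (or by a self-contained far-field smallness bootstrap) for the proposal to work.
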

\begin{proof}
We divide the proof into several steps:\\
\ \\
Step 1: Truncated system.\\
Define a $C^{\infty}$ cut-off function $\psi: \r\rt[0,1]$ satisfying
\begin{eqnarray}
\psi(x)=\left\{
\begin{array}{ll}
1&\ \ \text{if}\ \ x\leq0,\\
0&\ \ \text{if}\ \ x\geq1.
\end{array}
\right.
\end{eqnarray}
For $L>R_0$, define
\begin{eqnarray}
f^{\a,L}_0(x,p)&=&f^{\a}_0(x,p)\psi(\abs{x}-L),\\
E^L_{2,0}(x)&=&E_{2,0}(x)\psi(\abs{x}-L),\\
B^L_{0}(x)&=&B_{0}(x)\psi(\abs{x}-L),\\
\rho_0^L(x)&=&\sum_{\a}e^{\a}\int_{\r^2}f^{\a,L}_0(x,p)\ud{p}=\rho_0(x)\psi(\abs{x}-L),\\
j_0^L(x)&=&\sum_{\a}e^{\a}\int_{\r^2}f^{\a,L}_0(x,p)V^{\a}(p)\ud{p}=j_0(x)\psi(\abs{x}-L),\\
E_{1,0}^L(x)&=&2\pi\int_{-\infty}^x\rho_0^L(y)\ud(y)-2\pi\int^{\infty}_x\rho_0^L(y)\ud(y).
\end{eqnarray}
We can directly verify the truncated data satisfy the compatibility condition
\begin{eqnarray}
\p_xE_{1,0}^L(x)=4\pi\rho_0^L(x).
\end{eqnarray}
The truncated initial data $f_0^{\a,L}(x,p)$, $E_{1,0}^L(x)$, $E_{2,0}^L(x)$, $B_0^L(x)$, have compact support both in space and momenta, so with fixed light speed $c$, we can apply the main theorem in \cite{10} to obtain a global smooth solution $f^{\a,L}$, $E_{1}^L$, $E_{2}^L$, $B^L$.\\
\ \\
Step 2: Characteristics.\\
Define the maximum velocity support of $f^{\a,L}$ as
\begin{eqnarray}
Q^L(t)&=&\sup\bigg\{\abs{p}: f^{\a,L}(t,x,p)\neq0\ \text{for some}\ s\in[0,t],x\in\r\bigg\},
\end{eqnarray}
and characteristics $X^{\a,L}(s;t,x,p)$, $P^{\a,L}(s;t,x,p)$ of the truncated system by
\begin{eqnarray}\label{wt 23}
\left\{
\begin{array}{rcl}
\dfrac{\ud{X^{\a,L}}}{\ud{s}}&=&V^{\a}_1(P^{\a,L}),\\\rule{0ex}{2.0em}
\dfrac{\ud{P_1^{\a,L}}}{\ud{s}}&=&e^{\a}\bigg(E_1^{L}(s,X^{\a,L})+\c^{-1}V^{\a}_2(P^{\a,L})B^L(s,X^{\a,L})\bigg),\\\rule{0ex}{2.0em}
\dfrac{\ud{P_2^{\a,L}}}{\ud{s}}&=&e^{\a}\bigg(E_2^{L}(s,X^{\a,L})-\c^{-1}V^{\a}_1(P^{\a,L})B^L(s,X^{\a,L})\bigg),\\\rule{0ex}{2.0em}
X^{\a,L}(t;t,x,p)&=&x,\\
P_1^{\a,L}(t;t,x,p)&=&p_1,\\
P_2^{\a,L}(t;t,x,p)&=&p_2,\\
\end{array}
\right.
\end{eqnarray}
Since
\begin{eqnarray}
\abs{V^{\a}(P^{\a,L})}\leq c,
\end{eqnarray}
then for any $\abs{x}\geq L+ct$, we have
\begin{eqnarray}
f^{\a,L}(t,x,p)=E_{1}^L(t,x)=E_{2}^L(t,x)=B^L(t,x)=\rho^L(t,x)=j^L(t,x)=0,
\end{eqnarray}
which means they are still compactly supported in space for any $t$. \\
\ \\
Step 3: Estimate of $E_1^L$.\\
Integrating the Vlasov equation over $p\in\r^2$ and summing up over $\a$, we obtain
\begin{eqnarray}
\dt\rho^L+\p_xj^L=0.
\end{eqnarray}
Since $\rho^L$ and $j^L$ are of compact support, we can further integrate over $x\in\r$ to obtain (since $L>R_0$)
\begin{eqnarray}
\int_{\r}\rho^L(t,x)\ud{x}=\int_{\r}\rho^L_0(x)\ud{x}=\int_{\r}\rho_0(x)\ud{x}=0.
\end{eqnarray}
Hence, from the equation $\p_xE_{1,0}^L(x)=4\pi\rho_0^L(x)$, we obtain
\begin{eqnarray}
E_{1}^L(t,x)&=&2\pi\int_{-\infty}^x\rho^L(t,y)\ud{y}-2\pi\int^{\infty}_x\rho^L(t,y)\ud{y}=4\pi\int_{-\infty}^x\rho^L(t,y)\ud{y}\\
&=&4\pi\int_{\r}\rho^L(t,y)\id{y\leq x}\ud{y},\no
\end{eqnarray}
Therefore, we have
\begin{eqnarray}\label{wt 21}
E_1^L(t,x)-E^L_{1,0}(x)&=&4\pi\int_{\r}\bigg(\rho^L(t,y)-\rho_0^L(y)\bigg)\id{y\leq x}\ud{y}\\
&=&4\pi\sum_{\a}e^{\a}\int_{\r}\int_{\r^2}\bigg(f^{\a,L}(t,y,p)-f^{\a,L}_0(y,p)\bigg)\id{y\leq x}\ud{p}\ud{y}.\no
\end{eqnarray}
Define the substitution $(y,p)\rt(\tilde y,\tilde p)$ as
\begin{eqnarray}
\left\{
\begin{array}{rcl}
\tilde y&=&X^{\a,L}(0;t,y,p),\\
\tilde p&=&P^{\a,L}(0;t,y,p).
\end{array}
\right.
\end{eqnarray}
It is a classical result that the Jacobian of this substitution is $1$.
Then we have
\begin{eqnarray}\label{wt 22}
\int_{\r}\int_{\r^2}f^{\a,L}(t,y,p)\id{y\leq x}\ud{p}\ud{y}&=&\int_{\r}\int_{\r^2}f^{\a,L}_0(\tilde y,\tilde p)\id{y\leq x}\ud{\tilde p}\ud{\tilde y}\\
&=&\int_{\r}\int_{\r^2}f^{\a,L}_0(\tilde y,\tilde p)\id{X^{\a,L}(t;0,\tilde y,\tilde p)\leq x}\ud{\tilde p}\ud{\tilde y}.\no
\end{eqnarray}
Substituting (\ref{wt 22}) into (\ref{wt 21}) and rewriting the dummy variables in (\ref{wt 22}) as $(y,p)$ we get
\begin{eqnarray}\label{wt 26}
E_1^L(t,x)-E^L_{1,0}(x)&=&4\pi\sum_{\a}e^{\a}\int_{\r}\int_{\r^2}f^{\a,L}_0(y,p)\bigg(\id{X^{\a,L}(t;0,y,p)\leq x}-\id{y\leq x}\bigg)\ud{p}\ud{y}.
\end{eqnarray}
For $(y,p)$ such that $f_0^{\a,L}(y,p)\neq0$, we have
\begin{eqnarray}
\abs{P^{\a,L}(s;0,y,p)}&\leq&Q^L(s),
\end{eqnarray}
which further implies
\begin{eqnarray}
\abs{V^{\a}\bigg(P^{\a,L}(s;0,y,p)\bigg)}&\leq&\abs{\frac{P^{\a,L}(s;0,y,p)}{m^{\a}}}\leq \frac{Q^L(s)}{m^{\a}}.
\end{eqnarray}
Therefore, we get the bound for maximal distance between initial position and position at time $t$,
\begin{eqnarray}
\abs{X^{\a,L}(t;0,y,p)-y}&=&\abs{\int_0^tV^{\a}_1\bigg(P^{\a,L}(s;0,y,p)\bigg)\ud{s}}\leq \frac{1}{m^{\a}}\int_0^tQ^L(s)\ud{s}.
\end{eqnarray}
We decompose the integral over $y\in\r$ in (\ref{wt 26}) to get
\begin{eqnarray}
E_1^L(t,x)-E^L_{1,0}(x)&=&I+II+III,
\end{eqnarray}
where $I$ is the integral over $y\in\bigg(x+\dfrac{1}{m^{\a}}\displaystyle\int_0^tQ^L(s)\ud{s},+\infty\bigg)$, $II$ is the integral over $y\in\bigg(-\infty, x-\dfrac{1}{m^{\a}}\displaystyle\int_0^tQ^L(s)\ud{s}\bigg)$, and $III$ is the integral over $y\in\bigg[x-\dfrac{1}{m^{\a}}\displaystyle\int_0^tQ^L(s)\ud{s}, x+\dfrac{1}{m^{\a}}\int_0^tQ^L(s)\ud{s}\bigg]$.
\begin{itemize}
\item
In the integral $I$, we have $\id{X^{\a,L}(t;0,y,p)\leq x}-\id{y\leq x}=1-1=0$, which implies $I=0$;
\item
In the integral $II$, we have $\id{X^{\a,L}(t;0,y,p)\leq x}-\id{y\leq x}=0-0=0$, which implies $II=0$;
\end{itemize}
Therefore, we have
\begin{eqnarray}
|E_1^L(t,x)-E^L_{1,0}(x)|&=&|III|\leq 4\pi\sum_{\a}|e^{\a}|\max_{\a,x,p}{f_0^{\a,L}}\bigg(\frac{2}{m^{\a}}\int_0^tQ^L(s)\ud{s}\bigg)(2Q_0)^2\leq C_0\int_0^tQ^L(s)\ud{s},
\end{eqnarray}
which means we may take the supremum to obtain
\begin{eqnarray}\label{wt 24}
\sup_{[0,t]\times\r}|E_1^L(s,x)|\leq C_0+C_0\int_0^tQ^L(s)\ud{s}.
\end{eqnarray}
On the other hand, based on the characteristic equation (\ref{wt 23}) and Lemma \ref{lemma 1}, we know for $t\in[0,T]$,
\begin{eqnarray}\label{wt 25}
Q^L(t)-Q_0\leq C_0\int_0^t\sup_x{\bigg(\abs{E_1^L(s,x)}+\abs{E_2^L(s,x)}+\abs{B^L(s,x)}\bigg)}\ud{s}\leq C_0+C_0\sup_{[0,t]\times\r}|E_1^L(s,x)|.
\end{eqnarray}
Combining (\ref{wt 24}) and (\ref{wt 25}), we obtain
\begin{eqnarray}
Q^L(t)\leq C_0+C_0\int_0^tQ^L(s)\ud{s}.
\end{eqnarray}
By Gronwall's inequality, we have for $t\in[0,T]$,
\begin{eqnarray}
0<Q^L(t)\leq C_0,
\end{eqnarray}
which further implies
\begin{eqnarray}
\lnm{E_1^L}\leq C_0,
\end{eqnarray}
where $C_0$ only depends on $T$ and the initial data and is independent of $L$ and $c$.\\
\ \\
Step 4: Synthesis: \\
For any finite $c$, the domain of dependence of the point $(t,x,p)$ is bounded in $[0,T]\times\r\times\r^2$, so we can always take $L$ large enough that the solution $f^{\a}(t,x,p)=f^{\a,L}(t,x,p)$. Hence, we have shown
\begin{eqnarray}
\lnm{E_1}\leq C_0.
\end{eqnarray}
The existence of $Q(t)$ is guaranteed by the analysis of $Q^L(t)$.
\end{proof}

\noindent The proof of Theorem \ref{main 1} is now complete.

%%%%%%%%%%%%%%%%%%%%%%%%%%%%%%%%%%%%%%%%%%%%%%%%%%%%%%%%%%%%%%%%%%%%%%%%
\section{The Limit as c Tends to Infinity}%%%%%%%%%%%%%%%%%%%%%%%%%%%%%%%%%%%%%%%%%
%%%%%%%%%%%%%%%%%%%%%%%%%%%%%%%%%%%%%%%%%%%%%%%%%%%%%%%%%%%%%%%%%%%%%%%%

Regarding $f^{\a,\infty}, \rho^{\infty}, j^{\infty}, E^{\infty}_1$ as defined in (\ref{wt j11}), we have the following:
\begin{theorem} \label{limiting bounds}
We have
\begin{eqnarray}
\lnm{E_1^{\infty}}+\lnm{f^{\a,\infty}}+\lnm{\partial_xf^{\a,\infty}}+\\\lnm{\partial_{p_1}f^{\a,\infty}}+
\lnm{\partial_{p_2}f^{\a,\infty}}&\leq&C_0.\no
\end{eqnarray}
Also, for each $t>0$, there exists $Q^{\infty}(t)$ such that $f^{\a,\infty}(s,x,p)=0$ for any $\abs{p}\geq Q^{\infty}(t)$ and $0\leq s\leq t \leq T$.
\end{theorem}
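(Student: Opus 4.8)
The plan is to transplant the argument of Lemma \ref{lemma 2} to the limiting system (\ref{wt j11}) and then append a variational estimate along the characteristics for the first derivatives. The limiting characteristics $(X^{\a,\infty},P^{\a,\infty})$ obey
\[
\frac{\ud{X^{\a,\infty}}}{\ud{s}}=\frac{P_1^{\a,\infty}}{m^\a},\qquad
\frac{\ud{P_1^{\a,\infty}}}{\ud{s}}=e^\a E_1^\infty(s,X^{\a,\infty}),\qquad
\frac{\ud{P_2^{\a,\infty}}}{\ud{s}}=0,
\]
so $P_2^{\a,\infty}$ is conserved and no magnetic field enters the force. Because $f^{\a,\infty}$ is constant along these (measure-preserving) characteristics, $0\le f^{\a,\infty}\le\max f_0^\a\le C_0$ is immediate.

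First I would bound $E_1^\infty$ and the momentum support $Q^\infty(t)$ together, as in Steps 3--4 of Lemma \ref{lemma 2}. From (\ref{wt j6}) the density $\rho_0$ is compactly supported, and (\ref{wt j8}) together with the conservation law $\dt\rho^\infty+\p_x j^\infty=0$ gives $\int_\r\rho^\infty(t,y)\ud{y}=0$; hence $E_1^\infty(t,x)=4\pi\int_\r\rho^\infty(t,y)\id{y\le x}\ud{y}$. Pushing this integral back to $t=0$ along the Jacobian-one flow and subtracting $E_{1,0}$, the difference $E_1^\infty(t,x)-E_{1,0}(x)$ becomes an integral of $f_0^\a$ against $\id{X^{\a,\infty}(t;0,y,p)\le x}-\id{y\le x}$, which vanishes except on a $y$-strip of width $\frac{2}{m^\a}\int_0^tQ^\infty(s)\ud{s}$ since $\abs{X^{\a,\infty}(t;0,y,p)-y}\le\frac{1}{m^\a}\int_0^tQ^\infty(s)\ud{s}$. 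This yields $\sup_{[0,t]\times\r}\abs{E_1^\infty}\le C_0+C_0\int_0^tQ^\infty(s)\ud{s}$, while the $P_1$-equation (with $\abs{P_2^{\a,\infty}}\le Q_0$ conserved) gives $Q^\infty(t)-Q_0\le C_0\int_0^t\sup_x\abs{E_1^\infty(s,x)}\ud{s}$. Combining the two and applying Gronwall's inequality produces $Q^\infty(t)\le C_0$ and hence $\lnm{E_1^\infty}\le C_0$ on $[0,T]$; this is in fact cleaner than Lemma \ref{lemma 2}, where $E_2,B$ also entered the force.

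Next I would estimate the derivatives. Since the transport operator $\mathcal T^\a=\dt+\frac{p_1}{m^\a}\p_x+e^\a E_1^\infty\p_{p_1}$ has $p_2$-independent coefficients, it commutes with $\p_{p_2}$, so $\mathcal T^\a(\p_{p_2}f^{\a,\infty})=0$ and $\lnm{\p_{p_2}f^{\a,\infty}}\le\lnm{\p_{p_2}f_0^\a}\le C_0$. Differentiating $\mathcal T^\a f^{\a,\infty}=0$ in $x$ and $p_1$ gives
\[
\mathcal T^\a(\p_x f^{\a,\infty})=-e^\a(\p_x E_1^\infty)\,\p_{p_1}f^{\a,\infty},\qquad
\mathcal T^\a(\p_{p_1}f^{\a,\infty})=-\frac{1}{m^\a}\p_x f^{\a,\infty}.
\]
Writing $a(s)=\p_x f^{\a,\infty}$ and $b(s)=\p_{p_1}f^{\a,\infty}$ evaluated along a characteristic, and using $\p_x E_1^\infty=4\pi\rho^\infty$ with $\abs{\rho^\infty}\le C_0(Q^\infty)^2\le C_0$, one gets $\abs{a'}\le C_0\abs b$ and $\abs{b'}\le C_0\abs a$; thus $\frac{\ud{}}{\ud{s}}(\abs a+\abs b)\le C_0(\abs a+\abs b)$ and Gronwall gives $\abs a+\abs b\le C_0$, i.e. $\lnm{\p_x f^{\a,\infty}}+\lnm{\p_{p_1}f^{\a,\infty}}\le C_0$.

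The main obstacle is the coupled closure in the second paragraph: the bound on $E_1^\infty$ and the bound on $Q^\infty$ each presuppose the other, so neither can be estimated in isolation. The indicator-difference cancellation, confining the $y$-integration to a strip whose width is itself controlled by $Q^\infty$, is precisely what makes the Gronwall loop close with a finite constant depending only on $T$ and the initial data. Once $\p_x E_1^\infty=4\pi\rho^\infty$ is known to be bounded, the remaining derivative estimates reduce to a routine linear ODE along the characteristics.
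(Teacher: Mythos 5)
Your proposal is correct and follows essentially the same route as the paper: the $L^\infty$ bounds on $f^{\a,\infty}$, $E_1^{\infty}$ and the momentum support $Q^{\infty}(t)$ are obtained by transplanting the indicator--strip/Gronwall argument of Lemma \ref{lemma 2}, and the derivative bounds come from the differentiated transport equations, with $\p_{p_2}f^{\a,\infty}$ satisfying the homogeneous equation and the coupled pair $(\p_x f^{\a,\infty},\p_{p_1}f^{\a,\infty})$ closed by Gronwall using $\abs{\p_x E_1^{\infty}}=4\pi\abs{\rho^{\infty}}\leq C_0$. The only cosmetic difference is that you run Gronwall for $\abs{\p_x f^{\a,\infty}}+\abs{\p_{p_1}f^{\a,\infty}}$ along a single characteristic, whereas the paper combines two sup-based integral inequalities; the content is identical.
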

\begin{proof}

The bound on $\lnm{E_1^{\infty}}+\lnm{f^{\a,\infty}}$ and the construction of $Q^{\infty}(t)$ may be obtained by the methods of the previous section.  To bound the derivatives define $R_x^{\a}=\p_{x}f^{\a,\infty}$, $R_{p_1}^{\a}=\p_{p_1}f^{\a,\infty}$ and $R_{p_2}^{\a}=\p_{p_2}f^{\a,\infty}$. Then they satisfy
\begin{eqnarray}
\dt R_x^{\a}+V^{\a,\infty}_{1}(p)\p_{x}R_x^{\a}
+e^{\a}E_1^{\infty}\p_{p_1}R_x^{\a}=-e^{\a}\p_xE_1^{\infty}R_{p_1}^{\a},\\
\dt R_{p_1}^{\a}+V^{\a,\infty}_{1}(p)\p_{x}R_{p_1}^{\a}
+e^{\a}E_1^{\infty}\p_{p_1}R_{p_1}^{\a}=-\p_{p_1}V^{\a,\infty}_{1}(p)R_x^{\a},\\
\dt R_{p_2}^{\a}+V^{\a,\infty}_{1}(p)\p_{x}R_{p_2}^{\a}
+e^{\a}E_1^{\infty}\p_{p_1}R_{p_2}^{\a}=0,
\end{eqnarray}
with initial data
\begin{eqnarray}
R_x^{\a}(0,x,p)&=&\p_xf_0^{\a},\\
R_{p_1}^{\a}(0,x,p)&=&\p_{p_1}f_0^{\a},\\
R_{p_2}^{\a}(0,x,p)&=&\p_{p_2}f_0^{\a}.
\end{eqnarray}
By the bound on $f^{\a,\infty}$ and using $Q^{\infty}(t)$ we have
\begin{eqnarray}
\abs{\p_xE_1^{\infty}}=\abs{4\pi\rho^{\infty}}\leq C_0.
\end{eqnarray}
So integration along the characteristics of (\ref{wt j11}) yields
\begin{eqnarray}
\abs{R_x^{\a}(t,x,p)}&\leq&C_0+C_0\int_0^t\sup_{[0,s]\times\mathbb{R}\times\mathbb{R}^2}\abs{R_{p_1}^{\a}}\ud{s},\label{wt 27}\\
\abs{R_{p_1}^{\a}(t,x,p)}&\leq&C_0+C_0\int_0^t\sup_{[0,s]\times\mathbb{R}\times\mathbb{R}^2}\abs{R_x^{\a}}\ud{s},\label{wt 28}\\
\abs{R_{p_2}^{\a}(t,x,p)}&\leq&C_0.
\end{eqnarray}
Combining (\ref{wt 27}) and (\ref{wt 28}) and yields
\begin{eqnarray}
\sup_{[0,t]\times\mathbb{R}\times\mathbb{R}^2}\abs{R_x^{\a}}&\leq&C_0+C_0t+C_0t\int_0^t\sup_{[0,s]\times\mathbb{R}\times\mathbb{R}^2}\abs{R_x^{\a}}\ud{s}\\
&\leq&C_0+C_0\int_0^t\sup_{[0,s]\times\mathbb{R}\times\mathbb{R}^2}\abs{R_x^{\a}}\ud{s}.\no
\end{eqnarray}
By Gronwall's inequality, we have
\begin{eqnarray}
\sup_{[0,t]\times\mathbb{R}\times\mathbb{R}^2}\abs{R_x^{\a}}&\leq&C_0.
\end{eqnarray}
Similarly, we can prove
\begin{eqnarray}
\sup_{[0,t]\times\mathbb{R}\times\mathbb{R}^2}\abs{R_{p_1}^{\a}}&\leq&C_0.
\end{eqnarray}
\end{proof}

%%%%%%%%%%%%%%%%%%%%%%%%%%%%%%%%%%%%%%%%%%%%%%%%%%%%%%%%%%%%%%%%%%%%%%%%
\subsection{Estimate of $E_2$ and $B$}
%%%%%%%%%%%%%%%%%%%%%%%%%%%%%%%%%%%%%%%%%%%%%%%%%%%%%%%%%%%%%%%%%%%%%%%%

\begin{lemma}\label{lemma 3}
We have
\begin{eqnarray}
\lnm{E_2}+\lnm{B}\leq C_0\c^{-1}.
\end{eqnarray}
\end{lemma}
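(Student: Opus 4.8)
The starting point is the representation already obtained in the proof of Lemma \ref{lemma 1}. Since now $E_{2,0}=B_0=0$, integrating (\ref{wt j17}) along the two families of Maxwell characteristics $\tau\mapsto\bigl(\tau,x\mp\c(t-\tau)\bigr)$ gives
\[
(E_2\pm B)(t,x)=-4\pi\int_0^t j_2\bigl(\tau,x\mp\c(t-\tau)\bigr)\,\ud{\tau},
\]
so, writing $M(t)=\sup_{[0,t]\times\r}\bigl(\abs{E_2}+\abs{B}\bigr)$, it suffices to show $\sup_x\int_0^t\abs{j_2(\tau,x\mp\c(t-\tau))}\,\ud{\tau}\le C_0\c^{-1}$ for each $t\in[0,T]$. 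Note first that by Lemma \ref{lemma 2} the density $f^\a$ is supported in $\abs{p}\le Q(t)$ with $Q(t)$ \emph{independent of $\c$}, whence $\abs{V^\a_2}\le Q(t)/m^\a$ and $\abs{j_2}\le C_0$ pointwise, again uniformly in $\c$.

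The factor $\c^{-1}$ comes from a disparity of propagation speeds. The Maxwell characteristics travel at the light speed $\c$, whereas the charge and current are carried only at the particle speed $\abs{V^\a_1}\le Q(t)/m^\a=C_0$, uniformly in $\c$ by Lemma \ref{lemma 2}. Thus the region where $f^\a$ differs appreciably from the spatially homogeneous background $F^\a$ is confined, to leading order, to a \emph{material cone} $\mathcal K_\tau=\{\abs{y}\le R_0+C_0\tau\}$ of $O(1)$ slope. A single characteristic, moving at speed $\c\gg C_0$, meets $\mathcal K_\tau$ on a $\tau$-set of measure at most $C_0\c^{-1}$; on this set we insert the crude bound $\abs{j_2}\le C_0$, contributing $\le C_0\c^{-1}$ to the integral.

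Outside $\mathcal K_\tau$ the leading part of the charge vanishes: by (\ref{wt j6}) the background is neutral, and by (\ref{wt j8}), propagated by $\dt\rho+\p_x j_1=0$, the excess charge has zero spatial integral, so $E_1$ and the material part of $f^\a-F^\a$ are confined to $\mathcal K_\tau$. Hence outside $\mathcal K_\tau$ the only forces disturbing $f^\a$ are the transverse fields $E_2,B$ themselves, so that $f^\a-F^\a$, and therefore $j_2$, is there bounded by $C_0\int_0^\tau\bigl(\lnm{E_2}+\lnm{B}\bigr)\,\ud{s}\le C_0\int_0^\tau M(s)\,\ud{s}$. Collecting the two contributions,
\[
M(t)\le C_0\c^{-1}+C_0\int_0^t M(s)\,\ud{s},
\]
and Gronwall's inequality yields $M(t)\le C_0\c^{-1}$, i.e. $\lnm{E_2}+\lnm{B}\le C_0\c^{-1}$.

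The crux, and the step demanding genuine care, is the far-region estimate: one must verify that the strong, $O(1)$ portion of $j_2$ is really trapped inside the finite-speed material cone, and that the transverse fields feed back on $f^\a$ outside it only at order $\int_0^\tau M$. Disentangling the various field feedbacks here is delicate, since the Poisson field $E_1$ need not itself be of size $\c^{-1}$ in the far region; one exploits that the transverse force $e^\a(E_2-\c^{-1}V^\a_1 B)$ only weakly perturbs the characteristics, together with the uniform-in-$\c$ velocity support of Lemma \ref{lemma 2} and the neutrality conditions (\ref{wt j6})--(\ref{wt j8}). It is precisely the coupling between the smallness of $E_2,B$ and the smallness of the perturbation they induce that forces the argument to be closed as a Gronwall bootstrap in $M$.
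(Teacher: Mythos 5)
Your overall strategy matches the paper's: integrate the transverse Maxwell equations along light rays, split each ray into a bounded ``material'' region (where you use $\abs{j_2}\leq C_0$ together with the $O(\c^{-1})$ measure of the intersection of a light ray with that region) and a far region (where $f^{\a}$ is a perturbation of $F^{\a}$ transported along characteristics that never re-enter the material region), and close with Gronwall. The paper's set $\Lambda$ is precisely the complement of your cone $\mathcal{K}_\tau$, and its Steps~4--5 carry out your measure-counting step via the substitution $\tau\mapsto y$.

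However, your far-region estimate has a genuine gap. You assert that outside $\mathcal{K}_\tau$ ``$E_1$ and the material part of $f^{\a}-F^{\a}$ are confined to $\mathcal{K}_\tau$,'' so that only $E_2,B$ perturb the far characteristics, and you then run Gronwall on $M(t)=\sup(\abs{E_2}+\abs{B})$ alone. This is false: outside the cone one has $f^{\a}(t,x,p)=F^{\a}(P^{\a}(0;t,x,p))$, and the momentum map $p\mapsto P^{\a}(0;t,x,p)$ at fixed $(t,x)$ is neither volume preserving nor the same for different species, so $\rho=\sum_{\a}e^{\a}\int F^{\a}(P^{\a}(0;t,x,p))\ud{p}$ is only \emph{small of the order of the fields}, not zero; hence your zero-total-charge argument does not force $E_1=0$ there. (Your closing paragraph even concedes that ``$E_1$ need not itself be of size $\c^{-1}$ in the far region,'' which contradicts the confinement claim; acknowledging the difficulty is not resolving it.) Since $E_1$ enters $\ud{P^{\a}_1}/\ud{s}$ at order one, the far-region perturbation of $f^{\a}$, and hence of $j_2$, is controlled by $\nm{E_1}_{\Lambda}+\nm{E_2}_{\Lambda}+\nm{B}_{\Lambda}$ and not by $\int_0^\tau M$ alone, so your Gronwall inequality in $M$ does not close. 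The paper repairs exactly this point: it puts $\nm{E_1(t)}_{\Lambda}$ into the Gronwall quantity and derives for it the separate estimate (\ref{wt j15}), using $\dt E_1=-4\pi j_1$, the fact that $E_1(0,x)=0$ on $\Lambda$, and crucially the neutrality condition (\ref{wt j7}), which shows the background current $\sum_{\a}e^{\a}\int F^{\a}V^{\a}\ud{p}$ is $O(\c^{-2})$ rather than zero --- an $O(\c^{-2})$ term that is also missing from your far-region bound on $j_2$. With all three $\Lambda$-norms in the loop, Gronwall gives (\ref{wt j14}), and the global bound on $E_2,B$ then follows from (\ref{wt j13}).
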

\begin{proof}
We divide the proof into several steps:\\
\ \\
Step 1: Setup.\\
Define
\begin{eqnarray}
D_0=R_0+(1+T)\sup_{[0,T]}(Q(t)+Q^{\infty}(t))
\end{eqnarray}
and note that $|x|>D_0$ and $f^{\a}(t,x,p)\neq 0$ implies
\begin{eqnarray}
f^{\a}(t,x,p)=F^{\a}(X^{\a}(0;t,x,p)).
\end{eqnarray}
Denote $m_0=\min_{\a}m^{\a}$ and note that $f^{\a}(t,x,p)\neq0$ implies $\abs{V^{\a}}\leq\dfrac{D_0}{m_0}$.
Let
\begin{eqnarray}
\l=\left\{(t,x): t\in[0,T], \abs{x}\geq D_0\left(1+\frac{t}{m_0}\right)\right\}.
\end{eqnarray}
Thus, if $(t,x)\in\l$, with $s\in[0,t]$ and $f^{\a}(t,x,p)\neq0$, then
\begin{eqnarray}
\abs{X^{\a}(s;t,x,p)}&\geq&\abs{x}-\int_s^t\abs{V_1^{\a}\bigg(P^{\a}(\tau;t,x,p)\bigg)}\ud{\tau}\geq\abs{x}-\frac{D_0}{m_0}(t-s)\\
&\geq&D_0\left(1+\frac{t}{m_0}\right)-\frac{D_0}{m_0}(t-s)=D_0\left(1+\frac{s}{m_0}\right),\no
\end{eqnarray}
which implies $\left(s, X^{\a}(s;t,x,p)\right)\in\l$.
Denote
\begin{eqnarray}
\nm{\sigma(t)}_{\l}=\sup\left\{\abs{\sigma(s,x)}: s\in[0,t], (s,x)\in\l\right\}.
\end{eqnarray}
\ \\
Step 2: Estimate of $j$ in $\l$.\\
Consider $(t,x,p)$ with $(t,x)\in\l$ and $f^{\a}(t,x,p)\neq 0$. Then
\begin{eqnarray}
\dfrac{\ud{P_1^{\a}}}{\ud{s}}&=&e^{\a}\bigg(E_1(s,X^{\a})+\c^{-1}V^{\a}_2(P^{\a})B(s,X^{\a})\bigg),\\
\dfrac{\ud{P_2^{\a}}}{\ud{s}}&=&e^{\a}\bigg(E_2(s,X^{\a})-\c^{-1}V^{\a}_1(P^{\a})B(s,X^{\a})\bigg),
\end{eqnarray}
which implies
\begin{eqnarray}
\abs{\dfrac{\ud{P_1^{\a}}}{\ud{s}}}&\leq&C_0\bigg(\nm{E_1}_{\l}+\nm{B}_{\l}\bigg),\\
\abs{\dfrac{\ud{P_2^{\a}}}{\ud{s}}}&\leq&C_0\bigg(\nm{E_2}_{\l}+\nm{B}_{\l}\bigg).
\end{eqnarray}
Therefore,
\begin{eqnarray}
\abs{P^{\a}(0;t,x,p)-p}\leq C_0\bigg(\nm{E_1}_{\l}+\nm{E_2}_{\l}+\nm{B}_{\l}\bigg),
\end{eqnarray}
and
\begin{eqnarray}
\abs{f^{\a}(t,x,p)-F^{\a}(p)}&=&\abs{F^{\a}(P^{\a}(0;t,x,p))-F^{\a}(p)}\leq C_0\bigg(\nm{E_1}_{\l}+\nm{E_2}_{\l}+\nm{B}_{\l}\bigg).
\end{eqnarray}
It follows from assumption (\ref{wt j7}) that
\begin{eqnarray}
\displaystyle\abs{\sum_{\a}\bigg(e^{\a}\int_{\r^2}F^{\a}V^{\a}\ud{p}\bigg)}=\abs{\sum_{\a}\bigg(e^{\a}\int_{\r^2}F^{\a}(V^{\a}-p/m^{\a})\ud{p}\bigg)}\\\leq \sum_{\a}|e^{\a}|\int_{\mathbb{R}^2}F^{\a}c^{-2}\frac{|p|^3}{(m^{\a})^3}dp\leq C_0\c^{-2},\no
\end{eqnarray}
so
\begin{eqnarray}\label{wt j18}
\abs{j(t,x)}&=&\abs{\sum_{\a}\bigg(e^{\a}\int_{\r^2}f^{\a}V^{\a}\ud{p}\bigg)}\leq C_0\c^{-2}+\abs{\sum_{\a}\bigg(e^{\a}\int_{\r^2}(f^{\a}-F^{\a})V^{\a}\ud{p}\bigg)}\\
&\leq&C_0\c^{-2}+C_0\bigg(\nm{E_1}_{\l}+\nm{E_2}_{\l}+\nm{B}_{\l}\bigg).\no
\end{eqnarray}
Hence, we know
\begin{eqnarray}
\nm{j(t)}_{\l}&\leq&C_0\c^{-2}+C_0\bigg(\nm{E_1(t)}_{\l}+\nm{E_2(t)}_{\l}+\nm{B(t)}_{\l}\bigg).
\end{eqnarray}
\ \\
Step 3: Estimate of $E_1$ in $\l$.\\
Since $E_1$ satisfies $\dt E_1=-4\pi j_1$, we have
\begin{eqnarray}
E_1(t,x)=E_1(0,x)-4\pi\int_0^tj_1(s,x)\ud{s}.
\end{eqnarray}
For $(t,x)\in\l$, we know $E_1(0,x)=0$. Thus,
\begin{eqnarray}
\abs{E_1(t,x)}\leq 4\pi\int_0^t\abs{j_1(s,x)}\ud{s}.
\end{eqnarray}
Therefore, we have
\begin{eqnarray}\label{wt j15}
\nm{E_1(t)}_{\l}\leq C_0\c^{-2}+C_0\int_0^t\bigg(\nm{E_1(s)}_{\l}+\nm{E_2(s)}_{\l}+\nm{B(s)}_{\l}\bigg)\ud{s}.
\end{eqnarray}
\ \\
Step 4: Estimate of $E_2$ and $B$ in $\l$.\\
Next, we consider $E_2$ and $B$ for any $(t,x)\in[0,T]\times\r$. Based on (\ref{wt j17}), we can directly obtain
\begin{eqnarray}
E_2&=&-2\pi\int_0^t\bigg(j_2(\tau,x-\c(t-\tau))+j_2(\tau,x+\c(t-\tau))\bigg)\ud{\tau},\\
B&=&-2\pi\int_0^t\bigg(j_2(\tau,x-\c(t-\tau))-j_2(\tau,x+\c(t-\tau))\bigg)\ud{\tau}.
\end{eqnarray}
Hence, in order to estimate $E_2$ and $B$, we need to bound $\abs{\displaystyle\int_0^tj_2(\tau,x-\c(t-\tau))\ud{\tau}}$ and $\abs{\displaystyle\int_0^tj_2(\tau,x+\c(t-\tau))\ud{\tau}}$.
By substitution, we need to bound
\begin{eqnarray}
\int_0^tj_2(\tau,x-\c(t-\tau))\ud{\tau}=\c^{-1}\int_{x-\c t}^{x} j_2\left(t-\frac{x-y}{\c},y\right)\ud{y},
\end{eqnarray}
and
\begin{eqnarray}
\int_0^tj_2(\tau,x+\c(t-\tau))\ud{\tau}=\c^{-1}\int^{x+\c t}_{x} j_2\left(t-\frac{y-x}{\c},y\right)\ud{y}.
\end{eqnarray}
Note that $(s,y)\notin \l \Rightarrow |y|<D_0(1+s/m_0)\leq D_0(1+T/m_0)=C_0$ so by (\ref{wt j18}) and Theorem \ref{main 1}
\begin{eqnarray}
\c^{-1}\abs{\int_{x-\c t}^{x} j_2\left(t-\frac{x-y}{\c},y\right)\ud{y}}&\leq&c^{-1}\int_{-C_0}^{C_0}\nm{j_2}_{L^{\infty}}dy\\
&+&c^{-1}\int_{x-ct}^xC_0\bigg(c^{-2}+\nm{E_1(t-\frac{x-y}{c})}_{\l}+\nm{E_2(t-\frac{x-y}{c})}_{\l}+\nm{B(t-\frac{x-y}{c})}_{\l}\bigg)\ud{y}\no\\
&\leq&C_0\c^{-1}+C_0\int_0^t\bigg(\nm{E_1(s)}_{\l}+\nm{E_2(s)}_{\l}+\nm{B(s)}_{\l}\bigg)\ud{s}\no,
\end{eqnarray}
and similarly
\begin{eqnarray}
\abs{\c^{-1}\int^{x+\c t}_{x} j_2\left(t-\frac{y-x}{\c},y\right)\ud{y}}&\leq&C_0\c^{-1}+C_0\int_0^t\bigg(\nm{E_1(s)}_{\l}+\nm{E_2(s)}_{\l}+\nm{B(s)}_{\l}\bigg)\ud{s}.
\end{eqnarray}
This implies
\begin{eqnarray}\label{wt j13}
|E_2(t,x)|+|B(t,x)|\leq C_0\c^{-1}+C_0\int_0^t\bigg(\nm{E_1(s)}_{\l}+\nm{E_2(s)}_{\l}+\nm{B(s)}_{\l}\bigg)\ud{s}.
\end{eqnarray}
We may restrict (\ref{wt j13}) to $\l$ to obtain
\begin{eqnarray}\label{wt j16}
\nm{E_2(t)}_{\l}+\nm{B(t)}_{\l}\leq C_0\c^{-1}+C_0\int_0^t\bigg(\nm{E_1(s)}_{\l}+\nm{E_2(s)}_{\l}+\nm{B(s)}_{\l}\bigg)\ud{s}.
\end{eqnarray}
\ \\
Step 5: Synthesis.\\
In summary, we know from (\ref{wt j15}) and (\ref{wt j16}) that
\begin{eqnarray}
\nm{E_1(t)}_{\l}+\nm{E_2(t)}_{\l}+\nm{B(t)}_{\l}\leq C_0\c^{-1}+C_0\int_0^t\bigg(\nm{E_1(s)}_{\l}+\nm{E_2(s)}_{\l}+\nm{B(s)}_{\l}\bigg)\ud{s}.
\end{eqnarray}
By Gronwall's inequality, we have
\begin{eqnarray}\label{wt j14}
\nm{E_1(t)}_{\l}+\nm{E_2(t)}_{\l}+\nm{B(t)}_{\l}\leq C_0\c^{-1}\ue^{C_0t}\leq C_0\c^{-1}.
\end{eqnarray}
Therefore, by (\ref{wt j13}) we have
\begin{eqnarray}
\lnm{E_2(t,x)}+\lnm{B(t,x)}\leq C_0\c^{-1}.
\end{eqnarray}

\end{proof}

%%%%%%%%%%%%%%%%%%%%%%%%%%%%%%%%%%%%%%%%%%%%%%%%%%%%%%%%%%%%%%%%%%%%%%%%
\subsection{Estimate of $E_1-E_1^{\infty}$ and $f^{\a}-f^{\a}_{\infty}$}
%%%%%%%%%%%%%%%%%%%%%%%%%%%%%%%%%%%%%%%%%%%%%%%%%%%%%%%%%%%%%%%%%%%%%%%%

\begin{lemma}\label{lemma 4}
We have
\begin{eqnarray}
\lnm{E_1-E_1^{\infty}}+\lnm{f^{\a}-f^{\a}_{\infty}}\leq C_0\c^{-1}.
\end{eqnarray}
\end{lemma}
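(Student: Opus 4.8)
The plan is to compare the relativistic flow with the limiting one along the \emph{relativistic} characteristics and to derive two coupled Gronwall inequalities for
\[
A(t)=\sum_{\a}\lnm{f^{\a}(t,\cdot,\cdot)-f^{\a,\infty}(t,\cdot,\cdot)},\qquad B(t)=\lnm{E_1(t,\cdot)-E_1^{\infty}(t,\cdot)},
\]
which then close by Gronwall. Throughout I use that $f^{\a}$ and $f^{\a,\infty}$ have $p$-support in $\abs{p}\leq C_0$ uniformly in $\c$ (Lemma \ref{lemma 2}, Theorem \ref{limiting bounds}), that $\lnm{E_2}+\lnm{B}\leq C_0\c^{-1}$ (Lemma \ref{lemma 3}), that $\p_xf^{\a,\infty},\p_{p_1}f^{\a,\infty},\p_{p_2}f^{\a,\infty}$ are bounded (Theorem \ref{limiting bounds}), and the elementary estimate $\abs{V_1^{\a}(p)-p_1/m^{\a}}\leq C_0\c^{-2}$ on the bounded $p$-support.

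For the distribution function set $g^{\a}=f^{\a}-f^{\a,\infty}$. Subtracting (\ref{wt j11}) from (\ref{simple system}) and writing the left side as the relativistic transport operator applied to $g^{\a}$, one finds along the characteristics $(X^{\a},P^{\a})$ that
\[
\frac{d}{ds}g^{\a}(s,X^{\a},P^{\a})=-\Big(V_1^{\a}-\tfrac{p_1}{m^{\a}}\Big)\p_xf^{\a,\infty}-e^{\a}\big(E_1-E_1^{\infty}+\c^{-1}V_2^{\a}B\big)\p_{p_1}f^{\a,\infty}-e^{\a}\big(E_2-\c^{-1}V_1^{\a}B\big)\p_{p_2}f^{\a,\infty},
\]
all quantities being evaluated along the characteristic. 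By the facts quoted above, the velocity-mismatch term is $O(\c^{-2})$ and the terms containing $E_2$, $B$ are $O(\c^{-1})$ by Lemma \ref{lemma 3}, so the right side is bounded by $C_0\c^{-1}+C_0\abs{(E_1-E_1^{\infty})(s,X^{\a})}$. Since the two systems share the initial datum $f_0^{\a}$, we have $g^{\a}(0,\cdot,\cdot)=0$; integrating in $s$ and taking the supremum in $(x,p)$ gives
\[
A(t)\leq C_0\c^{-1}+C_0\int_0^tB(s)\,ds.
\]

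For the field I avoid the spatial representation of $E_1-E_1^{\infty}$, which is awkward because $\rho$ and $\rho^{\infty}$ need not be integrable in $x$ for non-decaying data, and instead use the temporal Maxwell law $\dt E_1=-4\pi j_1$. The identical relation $\dt E_1^{\infty}=-4\pi j_1^{\infty}$ holds for the limit: running the region-$\l$ argument of Lemma \ref{lemma 3} for (\ref{wt j11}) (where $E_2^{\infty}=B^{\infty}=0$ and, by the neutrality (\ref{wt j7}), the background current $\sum_{\a}e^{\a}\int F^{\a}p/m^{\a}\,\ud{p}$ vanishes exactly) forces $E_1^{\infty}\equiv0$ and $j^{\infty}\equiv0$ on $\l$, so $j^{\infty}\to0$ at spatial infinity and the boundary terms from differentiating the definition of $E_1^{\infty}$ drop out. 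Since $E_1$ and $E_1^{\infty}$ share the initial datum $E_{1,0}$, we obtain $(E_1-E_1^{\infty})(t,x)=-4\pi\int_0^t(j_1-j_1^{\infty})(s,x)\,ds$. Decomposing
\[
j_1-j_1^{\infty}=\sum_{\a}e^{\a}\int_{\r^2}\Big[\big(V_1^{\a}-\tfrac{p_1}{m^{\a}}\big)f^{\a}+\tfrac{p_1}{m^{\a}}\big(f^{\a}-f^{\a,\infty}\big)\Big]\ud{p}
\]
and using the bounded $p$-support, the first group is $O(\c^{-2})$ and the second is controlled by $A(s)$, so $B(t)\leq C_0\c^{-2}+C_0\int_0^tA(s)\,ds$.

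Substituting this into the bound for $A$ yields $A(t)\leq C_0\c^{-1}+C_0\int_0^tA(s)\,ds$, whence $A(t)\leq C_0\c^{-1}$ by Gronwall, and then $B(t)\leq C_0\c^{-1}$; this gives the claim. The main obstacle is precisely the field estimate: for non-decaying data the densities do not decay in $x$, so the naive spatial formula for $E_1-E_1^{\infty}$ is not available, and the crux is to replace it by the temporal relation together with the exact analogue for $E_1^{\infty}$ — which is where the neutrality hypotheses (\ref{wt j6})--(\ref{wt j7}) and the vanishing of the limiting field on $\l$ are essential.
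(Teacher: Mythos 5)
Your proof is correct and follows essentially the same route as the paper: subtract the two Vlasov equations, integrate the difference along the relativistic characteristics using the bounds from Theorem \ref{limiting bounds} and Lemma \ref{lemma 3}, control $E_1-E_1^{\infty}$ through $\dt(E_1-E_1^{\infty})=-4\pi(j_1-j_1^{\infty})$, and close with Gronwall (the paper runs Gronwall on $\sup|E_1-E_1^{\infty}|$ rather than on $A(t)$, a trivial variation). Your extra justification that $\dt E_1^{\infty}=-4\pi j_1^{\infty}$ — via the region-$\l$ argument and (\ref{wt j7}) — and your explicit $O(\c^{-2})$ bookkeeping in the decomposition of $j_1-j_1^{\infty}$ make precise two steps that the paper asserts without comment.
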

\begin{proof}

Since $\dt E_1=-4\pi j_1$ and $\dt E_1^{\infty}=-4\pi j_1^{\infty}$, we have
\begin{eqnarray}\label{ct 01}
\abs{E_1-E_1^{\infty}}&\leq&4\pi\int_0^t\abs{j_1(s,x)-j_1^{\infty}(s,x)}\ud{s}\\
&\leq& C_0\sum_{\a}\int_0^t\int_{\r^2}\abs{h^{\a}}\ud{p}\ud{s}\no
\end{eqnarray}
where $h^{\a}(t,x,p)=f^{\a}(t,x,p)-f^{\a,\infty}(t,x,p)$.  Note that $h^{\a}$ satisfies the equation
\begin{eqnarray}
&&\dt h^{\a}+V^{\a}_1(p)\p_{x}h^{\a}
+e^{\a}\left(E_1+\c^{-1}V^{\a}_2(p)B\right)\p_{p_1}h^{\a}+e^{\a}\left(E_2-\c^{-1}V^{\a}_1(p)B\right)\p_{p_2}h^{\a}\\
&=&(V^{\a,\infty}_1(p)-V^{\a}_1(p))\p_xf^{\a,\infty}+e^{\a}\left(E^{\infty}_1-E_1-\c^{-1}V^{\a}_2(p)B\right)\p_{p_1}f^{\a,\infty}-e^{\a}
\left(E_2-\c^{-1}V^{\a}_1(p)B\right)\p_{p_2}f^{\a,\infty}\no
\end{eqnarray}
and
\begin{eqnarray}
h^{\a}(0,x,p)=0.
\end{eqnarray}
By Theorem \ref{limiting bounds} and by Lemma \ref{lemma 3} we have
\begin{eqnarray}
\abs{\bigg(V^{\a,\infty}_1(p)-V^{\a}_1(p)\bigg)\p_xf^{\a,\infty}}&\leq&C_0\c^{-2},\\
\abs{e^{\a}\left(E_1^{\infty}-E_1-\c^{-1}V^{\a}_2(p)B\right)\p_{p_1}f^{\a,\infty}}&\leq&C_0\c^{-1}+\sup_{[0,t]\times\r}\abs{E_1-E_1^{\infty}},\\
\abs{e^{\a}\left(E_2-\c^{-1}V^{\a}_1(p)B\right)\p_{p_2}f^{\a,\infty}}&\leq&C_0\c^{-1}.
\end{eqnarray}
Then integrating along the characteristics of the Vlasov equation, we have
\begin{eqnarray}\label{ct 02}
\abs{h^{\a}(t,x,p)}\leq C_0\c^{-1}+C_0\int_0^t\sup_{[0,s]\times\r}\abs{E_1-E_1^{\infty}}\ud{s}.
\end{eqnarray}
Combining (\ref{ct 01}) and (\ref{ct 02}), we have
\begin{eqnarray}
\abs{E_1-E_1^{\infty}}&\leq& C_0\sum_{\a}\int_0^t\int_{|p|<Q(s)+Q^{\infty}(s)}\sup_{[0,s]\times\mathbb{R}\times\mathbb{R}^2}|h^{\a}|dpds\\
&\leq&C_0t(c^{-1}+\int_0^t\sup_{[0,s]\times\mathbb{R}}\abs{E_1-E_1^{\infty}}\ud{s}).\no
\end{eqnarray}
Taking the supremum yields
\begin{eqnarray}
\sup_{[0,t]\times\r}\abs{E_1-E_1^{\infty}}&\leq& C_0T(\c^{-1}+\int_0^t\sup_{[0,s]\times\r}\abs{E_1-E_1^{\infty}}\ud{s}).
\end{eqnarray}
Using Gronwall's inequality, for $t\in[0,T]$ we obtain
\begin{eqnarray}
\sup_{[0,t]\times\r}\abs{E_1-E_1^{\infty}}&\leq&C_0\c^{-1},
\end{eqnarray}
which is
\begin{eqnarray}
\lnm{E_1-E_1^{\infty}}&\leq&C_0\c^{-1}.
\end{eqnarray}
By (\ref{ct 02}) this implies
\begin{eqnarray}
\lnm{f^{\a}-f^{\a}_{\infty}}&\leq&C_0\c^{-1}.
\end{eqnarray}

\end{proof}
\noindent The proof of Theorem \ref{main 2} is now complete.


\begin{thebibliography}{99}


\bibitem[1]{1} Asano, K. and Ukai, S., On the Vlasov-Poisson limit of the Vlasov-Maxwell equation, Patterns and Waves-Qualitative Analysis of Nonlinear Differential Equations, Nishida et al. (eds), Studies in Math. Appl., vol. 18.

\bibitem[2]{2} Bostan, M., Asymptotic behavior of weak solutions for the relativistic Vlasov-Maxwell
equations with large light speed, {\it J. Diff. Eqns.}, {\bf 227} (2),
444-498 (2006).

\bibitem[3]{3} Caglioti, E., Carprino, S., Marchioro, C., and
Pulvirenti, M., The Vlasov equation with infinite mass, {\it Arch.
Rational Mech. Anal}, {\bf 159}, 85-108 (2001).

\bibitem[4]{4} Caglioti, E., Marchioro, C., and Pulvirenti, M., On
the two dimensional Vlasov-Helmholtz equation with infinite mass,
{\it Commun PDE}, {\bf 27} (384), 791-808 (2002).

\bibitem[5]{5} Caprino, S., A Vlasov-Poisson plasma model with non-$L^1$ data, {\it Math. Methods Appl. Sci.}, {\bf 27} (18), 2211-2229 (2004).

\bibitem[6]{6} Caprino, S., Cavallaro, G., and Marchioro, C., Time evolution of a Vlasov-Poisson plasma with infinite charge in $\mathbb{R}^3$. {\it Comm. Partial Differential Equations}, {\bf 40} (2), 357-385 (2015).

\bibitem[7]{7} Caprino, S., Cavallaro, G., and Marchioro, C., On a magnetically confined plasma with infinite charge, {\it SIAM J. Math. Anal.}, {\bf 46} (1), 133-164 (2014).

\bibitem[8]{8} Degond, P., Local existence of solutions of the Vlasov-Maxwell equations and convergence to the Vlasov-Poisson equations for infinite light velocity, {\it Math. Methods Appl. Sci.}, {\bf
8} (4), 533-558 (1986).

\bibitem[9]{9} Glassey R., The Cauchy Problem in Kinetic Theory,
{\it SIAM: Philadelphia} (1996).

\bibitem[10]{10} R. Glassey and J. Schaeffer, On the ``One and
one-half dimensional" relativistic Vlasov-Maxwell system,
{\it Math. Meth. Appl. Sci.}, {\bf 13}, 169-179 (1990).

\bibitem[11]{11} R. Glassey and J. Schaeffer,  The ``Two and
One-Half Dimensional" Relativistic Vlasov-Maxwell System,
 {\it Comm. in Math. Phys.}, {\bf 185}, 257-284 (1997).

\bibitem[12]{12} R. Glassey and J. Schaeffer,The Relativistic
Vlasov-Maxwell System in Two Space Dimensions: Parts I and II,
  {\it Arch. Rat. Mech. Anal.}, {\bf 141}, 331-354 (1998).

\bibitem[13]{13} Glassey, R. and Strauss, W., Similarity Formation
in a Collisionless Plasma Could Occur Only at High Velocities,
{\it Arch. Rational Mech. Anal.}, {\bf 92}, 59-90 (1986).

\bibitem[14]{14} Hadzic, M. and Rein, G.,  On the small redshift limit of steady states of the spherically symmetric Einstein-Vlasov system and their stability., {\it Math. Proc. Cambr. Phil. Soc.}, {\bf 159}, 529-546 (2015).

\bibitem[15]{15} Horst, E., On the Asymptotic Growth of the
Solutions of the Vlaso-Poisson System, {\it Math. Meth. Appl.
Sci.}, {\bf 16}, 75-85 (1993).

\bibitem[16]{16} Jabin, P.-E. The Vlasov-Poisson System with Infinite Mass and Energy, {\it J. Statist. Phys.}, {\bf 103} (5/6), 1107-1123 (2001).\\

\bibitem[17]{17} Lee, H., The classical limit of the relativistic Vlasov-Maxwell system in two space dimensions, {\it Math. Methods Appl. Sci.}, {\bf 27} (3), 249-287 (2004).

\bibitem[18]{18} Lions, P.L. and Perthame, B., Propagation of
moments and regularity for the 3-dimensional Vlasov-Poisson
system, {\it Invent. Math.}, {\bf 105}, 415-430 (1991).

\bibitem[19]{28} McGillen, D., A particle scheme incorporating an elliptic approximation for the relativistic Vlasov-Maxwell system, {\it SIAM J. Sci. Comput.} {\bf 16} no. 6, 1333-1348 (1995).

\bibitem[20]{29} McGillen, D., A low velocity approximation for the relativistic Vlasov-Maxwell system, {\it Math. Methods Appl. Sci.} {\it 18} no. 9, 739¨C753 (1995).

\bibitem[21]{19} Pankavich, S., Explicit solutions of the
one-dimensional Vlasov-Poisson system with infinite mass, {\it
Math. Methods Appl. Sci.}, {\bf 31} no. 4, 375-389 (2008).

\bibitem[22]{20} Pankavich, S., Local existence for the
one-dimensional Vlasov-Poisson system with infinite mass, {\it
Math. Methods Appl. Sci.}, {\bf 30}, 529-548 (2007).

\bibitem[23]{21} Pankavich, S., Global existence and increased
spatial decay for the radial Vlasov-Poisson system with steady
spatial asymptotics, {\it Transport Theory Statist. Phys.}, {\bf
36} no. 7, 531-562 (2007).

\bibitem[24]{22} Pankavich, S., Global existence for the
Vlasov-Poisson system with steady spatial asymptotics, {\it Comm.
Partial Differential Equations}, {\bf 31} no. 1-3, 349-370 (2006).

\bibitem[25]{23} Pfaffelmoser, K., Global Classical Solutions of
the Vlasov-Poisson System in Three Dimensions for General Initial
Data, {\it J. Diff. Eqns.}, {\bf 95}, 281-303 (1992).

\bibitem[26]{24} Schaeffer, J., Global Existence of Smooth
Solutions to the Vlasov-Poisson System in Three Dimensions, {\it
Commun. Part. Diff. Eqns.}, {\bf 16}, 1313-1335 (1991).

\bibitem[27]{25} Schaeffer, J., The Vlasov-Poisson System with
Steady Spatial Asymptotics, {\it Comm. PDE.}, {\bf 28} nos. 5 \&
6, 1057-1084 (2003).

\bibitem[28]{26} Schaeffer, J., Steady Spatial Asymptotics for the
Vlasov-Poisson System, {\it Math. Meth. Appl. Sci.}, {\bf 26},
273-296 (2003).

\bibitem[29]{27} Schaeffer, J., The Classical Limit of the Relativistic Vlasov-Maxwell System, {\it Commun. Math. Phys.,} {\bf 104 }, 403-421 (1986).


\end{thebibliography}
\end{document}